\newtheorem{thm}{Theorem}[section]
\newtheorem{Lemma}[thm]{Lemma}
\newtheorem{cor}[thm]{Corollary}
\newtheorem{pro}[thm]{Proposition}
\newtheorem{definition}[thm]{Definition}
\newtheorem{remark}[thm]{Remark}
\newtheorem{conj}{Conjecture}[section]
\newtheorem{exam}{Example}[section]
\begin{document}
\title[Cut points of virtual links and the arrow polynomial of twisted links]{One conjecture on cut points of virtual links and the arrow polynomial of twisted links}

\author{Qingying Deng}
\address{School of Mathematics and Computational Science, Xiangtan University, Xiangtan, Hunan 411105,
P. R. China}
\email {qingying@xtu.edu.cn}

\begin{abstract}
Checkerboard framings are an extension of checkerboard colorings for virtual links.
According to checkerboard framings, in 2017, Dye obtained an independent invariant of virtual links: the cut point number. Checkerboard framings and cut points can be used as a tool to extend
other classical invariants to virtual links.
We prove that one of the conjectures in Dye's paper is correct.
Moreover, we analyze the connection and difference between checkerboard framing obtained from virtual link diagram by adapting cut points and twisted link diagram obtained from virtual link diagram by introducing bars.
By adjusting the normalized arrow polynomial of virtual links, we generalize it to twisted links.
And we show that it is an invariant for twisted link.
Finally, we figure out three characteristics of the normalized arrow polynomial of a checkerboard colorable twisted link, which is a tool of detecting checkerboard colorability of a twisted link.
The latter two characteristics are the same as in the case of checkerboard colorable virtual link diagram.
\end{abstract}
\maketitle

$\mathbf{Keywords:}$ Twisted link; virtual link; checkerboard framings; cut points; arrow polynomial; checkerboard colorability.

\vskip0.5cm

\section{Introduction}
Virtual knot theory is a generalization of knot theory, one motivation is based on Gauss code \cite{VKT}.
Virtual links correspond to stable equivalence classes of links in oriented 3-manifolds which are thickened closed oriented surfaces \cite{VKT,Kam,Carter}.
Twisted knot theory, introduced by M.O. Bourgoin in \cite{Bourgoin}, is an extension of virtual knot theory, which is focused on link diagrams on closed, possibly non-orientable surfaces.
Twisted links are in one-to-one correspondence with abstract links on (oriented or non-oriented) surfaces \cite{Bourgoin}, and in one-to-one correspondence with stable equivalence classes of links in oriented thickenings of closed surfaces \cite{Bourgoin}.
Virtual links are regarded as twisted links. Recently, S. Kamada and N. Kamada discussed when two virtual links are equivalent as twisted links, and give a necessary and sufficient
condition for this to be the case in \cite{Kamada2020}.

A \emph{virtual link diagram} is a link diagram which may have virtual crossings, which are encircled crossings without over-under
information. A \emph{twisted link diagram} is a virtual link diagram which may have some bars. Three examples of
twisted link diagrams are depicted in Fig. \ref{F:twisted knot}, the latter one is virtual link diagram. (In \cite{Kamada2010}, a twisted link diagram is defined to be a virtual link diagram possibly
with some 2-valent vertices.)

A \emph{virtual link} is an equivalence class of a virtual link diagram by Reidemeister moves and virtual Reidemeister moves in Fig. \ref{F:classicalRM} and Fig. \ref{RM}.
Note that the virtual Reidemeister moves are equivalent to detour move (See Fig. \ref{dm}).
A \emph{twisted link} is an equivalence class of a twisted link diagram by Reidemeister moves, virtual Reidemeister moves
and \emph{twisted Reidemeister moves} in Fig. \ref{F:classicalRM}, \ref{RM} and \ref{F:T123}.
All of these are called \emph{extended Reidemeister moves}.
Note that we can use the virtual Reidemeister moves or detour move even though there are bars on some arcs based on move T1 as shown in Fig. \ref{Fig.detourfortl}
\begin{figure}[!htbp]
  \centering
  \includegraphics[width=8cm]{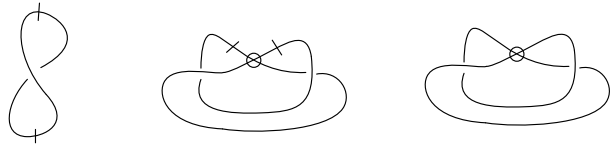}
   \renewcommand{\figurename}{Fig.}
  \caption{A onefoil twisted knot (left), a nonorientable twisted knot (middle), a virtual knot (right).}
  \label{F:twisted knot}
\end{figure}
\begin{figure}[!htbp]
  \centering
  \includegraphics[width=4cm]{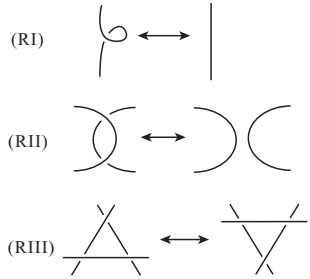}
   \renewcommand{\figurename}{Fig.}
  \caption{Reidemeister moves.}
  \label{F:classicalRM}
\end{figure}

\begin{figure}[!htbp]
  \centering
  \includegraphics[width=6cm]{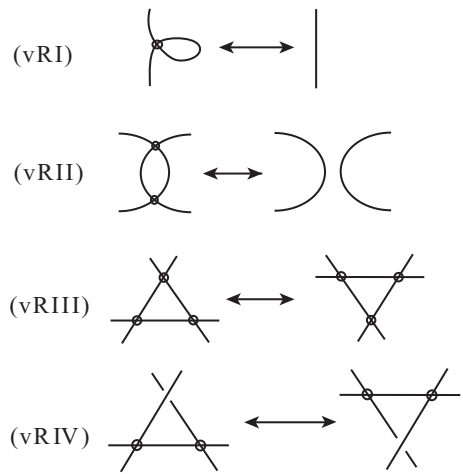}
   \renewcommand{\figurename}{Fig.}
  \caption{Virtual Reidemeister moves.}
  \label{RM}
\end{figure}

\begin{figure}[!htbp]
  \centering
  \includegraphics[width=8cm]{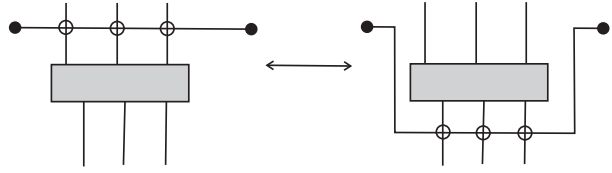}
   \renewcommand{\figurename}{Fig.}
  \caption{Detour move.}
  \label{dm}
\end{figure}
\begin{figure}[!htbp]
  \centering
  \includegraphics[width=8cm]{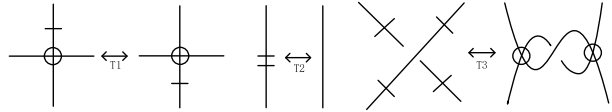}
   \renewcommand{\figurename}{Fig.}
  \caption{Twisted Reidemeister moves.}
  \label{F:T123}
\end{figure}
\begin{figure}[!htbp]
  \centering
  \includegraphics[width=8cm]{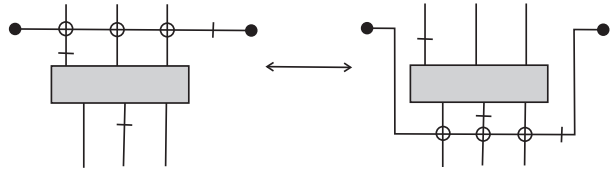}
    \renewcommand{\figurename}{Fig.}
  \caption{Detour move with one bar.}
  \label{Fig.detourfortl}
\end{figure}

 Checkerboard framings are an extension of checkerboard colorings for virtual links.
From checkerboard framings, Dye obtained an independent invariant of a virtual link: the
cut point number $\mathcal{P}(K)$ of a virtual link $K$ in \cite{Dye}. Checkerboard framings and cut points can be used as a tool to extend other classical invariants to virtual links.
In paper \cite{DyeKaeKau}, Dye used cut points or cut loci to extend the definition
of integer Khovanov homology \cite{Bar-Natan02} to virtual link diagrams. In \cite{DyeKaeKau}, cut points
were also used to extend the Rasmussen-Lee invariant (originally defined in \cite{Bar-NatanMorrison06})
to virtual links.
In \cite{Dye}, Dye made the following Conjecture \ref{conj1} about any two checkerboard framings of a
virtual link diagram. In this paper, we mainly give out a proof of Conjecture \ref{conj1} that is ture in Section \ref{proveconj}.
Moreover, we analyze the connection and difference between checkerboard framing obtained from virtual link diagram by adapting cut points and twisted link diagram obtained from virtual link diagram by introducing bars.
\begin{conj}\label{conj1}
Any two checkerboard framings of a diagram are related by a
sequence of the cut point moves I and II (Fig. \ref{F:cutpointmove}).
\end{conj}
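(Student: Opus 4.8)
The plan is to reduce a checkerboard framing to a single piece of combinatorial data -- a $2$--colouring of the regions of $D$ -- on which the cut point moves act transparently, and then to identify the difference between two framings of the fixed diagram $D$ as a sum of elementary ``region flips'', each to be built out of the moves of Fig.~\ref{F:cutpointmove}.

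\emph{Normal form and reduction.} A checkerboard framing of $D$ is a finite multiset of cut points on the edges of $D$ together with the induced $2$--colouring $\kappa$ of the regions of $D$. Cut point move~I creates or cancels a pair of cut points on a single arc, so by repeated use of move~I every framing is equivalent to a \emph{normalized} one carrying at most one cut point per edge; such a normalized framing is recorded by the set $S\subseteq E(D)$ of edges bearing a cut point, and the compatibility of $\kappa$ forces $S$ to be precisely the set of edges across which $\kappa$ does not change colour. Thus normalized framings correspond bijectively to $2$--colourings of the regions of $D$ (when $D$ is disconnected one works component by component, and the per-component global colour swap is dealt with below). Now two colourings $\kappa_1,\kappa_2$ of the regions differ on some finite set $\mathcal R$ of regions, and $\kappa_2$ is obtained from $\kappa_1$ by flipping the regions of $\mathcal R$ one at a time. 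Since flipping the colour of a single region $R$ changes $S$ exactly by toggling a cut point on each edge of the boundary $\partial R$ (the edges having $R$ on exactly one side) and on no other edge, the whole conjecture reduces to one point: that this \emph{elementary flip} -- recolour one region $R$, toggle a cut point on each edge of $\partial R$ -- can be realized by a sequence of cut point moves~I and~II.

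\emph{Realizing an elementary flip, and the obstacle.} This is where the real work lies and where the precise shapes of moves~I and~II enter. Move~II is a local reconfiguration of cut points in a neighbourhood of a single (classical or virtual) crossing, so the natural strategy is to induct on the number of crossings incident to $\partial R$, transporting the flip of $R$ past one boundary crossing at a time. Each local step will cost a bounded number of auxiliary cut points, and the bookkeeping must be set up so that the auxiliary cut points created at consecutive crossings of $\partial R$ annihilate in pairs via move~I and so that the process closes up consistently after one circuit of $\partial R$. Verifying this closing-up is, I expect, the crux of the argument: one must check that traversing the boundary of an arbitrary region by move~II leaves behind neither residual cut points nor residual miscolouring. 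This is the step that genuinely exploits how moves~I and~II are defined, and in particular how move~II interacts with \emph{virtual} crossings -- which are the only reason a framing can fail to be an ordinary checkerboard colouring, and hence the only reason cut points are needed at all. Two smaller points must also be cleared up: a region that is not a disk (it wraps around the supporting surface) is handled by subdividing it into disk pieces along auxiliary arcs and flipping the pieces in turn; and the global colour swap is the simultaneous flip of all regions, which toggles no cut point (each edge borders two regions) and in any case is the composition of all single-region flips, hence automatic once each such flip is available. Assembling these -- the reduction to elementary flips, their realization by moves~I and~II, and move~I for normalization throughout -- yields Conjecture~\ref{conj1}.
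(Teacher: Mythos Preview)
Your proposal has two concrete problems, and together they mean the argument does not go through as written.

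First, you have the two moves reversed. In this paper, Move~I is the local move at a \emph{classical crossing}: it adds or removes one cut point on each of the four incident edges, and its effect on the colouring is precisely to switch the colour pattern at that crossing (see Fig.~\ref{changecolor}). Move~II is the one that cancels a pair of adjacent cut points on a single edge. Your normalization step (``by repeated use of move~I every framing is equivalent to a normalized one carrying at most one cut point per edge'') is therefore really a use of Move~II, and your crossing-local ``Move~II'' is really Move~I. More seriously, the real Move~I is only available at classical crossings, not at virtual ones, so the inductive transport ``past one boundary crossing at a time'' that you sketch would require a separate argument at virtual crossings that you do not supply.

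Second, and more structurally, your reduction to ``regions'' is not the right bookkeeping for a virtual diagram. A checkerboard framing here is an \emph{edge} colouring satisfying the crossing condition of Fig.~\ref{Fig.colorassignment}; there are no planar regions to flip, because virtual crossings prevent the complementary regions of $|D|$ in the plane from carrying a consistent two-colouring (this is exactly why cut points are introduced). One could pass to the abstract link surface to recover honest regions, but you do not do so, and in any case it is unnecessary. The paper's argument stays at the level of crossings: given framings $f$ and $g$ with colourings $\mathcal{C}_f,\mathcal{C}_g$, apply Move~I at every classical crossing $c$ where $\mathcal{C}_f(c)\neq\mathcal{C}_g(c)$. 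This produces a (modified) framing $f'$ whose colouring agrees with $\mathcal{C}_g$ in a neighbourhood of every crossing; the only possible discrepancies are now on edge interiors, and Move~II removes them. No region-by-region induction, no closing-up verification, is needed. Your outline leaves exactly that ``crux'' step unproved, whereas the crossing-based approach dissolves it.
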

\begin{figure}[!htbp]
  \centering
  \includegraphics[width=8cm]{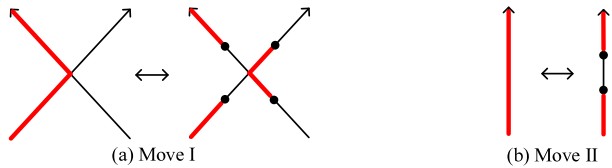}
   \renewcommand{\figurename}{Fig.}
  \caption{Cut point moves.}
  \label{F:cutpointmove}
\end{figure}

Dye and Kauffman introduced the arrow polynomial which is an invariant of oriented virtual knots and links in \cite{DKArrow}. This invariant
takes values in the ring $\mathbb{Z}[A,A^{-1},K_{1},K_{2},\cdots]$ where the $\{K_{i}|i\in\mathbb{N}\}$ are an infinite set of
independent commuting variables that also commute with the Laurent polynomial
variable $A$. This invariant was independently constructed by
Miyazawa in \cite{Miyazawa} using a different definition.
%It seems difficult to extend the original definition of arrow polynomial
%of a virtual link to an invariant of a twisted link.
In this paper, we extend the polynomial invariants defined by Dye and Kauffman to invariants of twisted links in Section \ref{exarrowpoly}.
Moreover, we figure out three characteristics of the normalized arrow polynomial of a checkerboard colorable twisted link, which is a tool of detecting checkerboard colorability of a twisted link.
The latter two characteristics are the same as in the case of checkerboard colorable virtual link diagram.

\section{Checkerboard framings of virtual link diagrams and checkerboard colorability of twisted links}\label{proveconj}
\subsection{Checkerboard framings of virtual link diagrams}
For a classical link diagram $D$, the diagram can be checkerboard colorable since its dual graph of the underlying 4-valent graph of $D$ is bipartite. The planar regions are alternately colored
so that the unbounded region of the plane is white.
The notion of a checkerboard coloring for a virtual link diagram was first introduced
by Kamada \cite{Kamada,Kamada2004} by using corresponding abstract link diagram defined in \cite{Kam}.
A virtual link diagram is said to be $\textit{checkerboard colorable}$ if there is a coloring of a small neighbourhood of one side of each arc in the diagram such that near a classical crossing the coloring alternates, and near a virtual crossing the colorings go through independent of the crossing strand and its coloring.
A virtual link is said to be $\textit{checkerboard colorable}$ if it has a checkerboard colorable diagram.
Two examples are given in Fig. \ref{ckb}.

\begin{figure}[!htbp]
  \centering
  \includegraphics[width=3.5in]{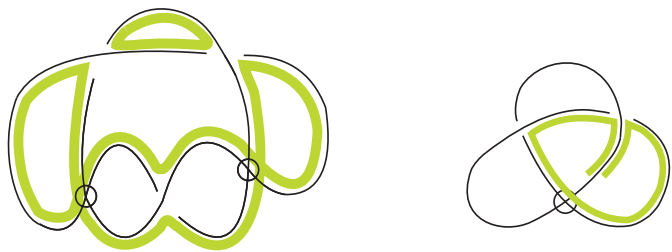}
  \renewcommand{\figurename}{Fig.}
\caption{The left virtual link diagram is checkerboard colorable (to draw only one color, omit the other), the right is not checkerboard colorable.}\label{ckb}
\end{figure}

For virtual link diagram $D$, let $|D|$ be the \emph{underlying 4-valent graph} which is obtained from $D$
by regarding all classical crossings as the vertices of $|D|$ and keeping or ignoring virtual crossings.
Then the segment between two classical crossings is called as an \emph{edge} of $|D|$.
In \cite{Kamada02} it is observed that giving a checkerboard coloring for $D$ is equivalent to giving
an \emph{alternate orientation} to $|D|$ that is an assignment of orientations to the edges
of $|D|$ satisfying the conditions illustrated in Fig. \ref{Fig.alterorietation}.
Note that every checkerboard colorable virtual link diagram has two kinds of checkerboard colorings and not every virtual link diagram is checkerboard colorable.
Checkerboard colorability of a virtual link diagram is not necessarily preserved by generalized Reidemeister moves.
See the explanation in \cite{Ima}.
Furthermore, note that an alternating virtual link diagram \cite{Kamada2004} must be checkerboard colorable (For example,
we can obtain an alternation orientation if we orient under crossing with two ``in'' and over crossing with two``out''.), on the contrary, it is not true.

\begin{figure}[!htbp]
  \centering
  \includegraphics[width=3.5cm]{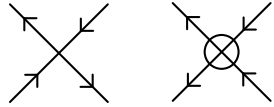}
  \renewcommand{\figurename}{Fig.}
\caption{An alternate orientation.}\label{Fig.alterorietation}
\end{figure}

In \cite{Dye}, Dye used thick edges and thin edges to distinguish ``colors'' on the edges.
The edge coloring version of a checkerboard coloring satisfies two conditions.
\begin{enumerate}
  \item[(1)] A color is assigned to each edge.
  \item[(2)] The color assignments respect crossings: the left-hand and right-hand side of crossings
have distinct color assignments as shown in Fig. \ref{Fig.colorassignment}.
\end{enumerate}
\begin{figure}[!htbp]
  \centering
  \includegraphics[width=2.5cm]{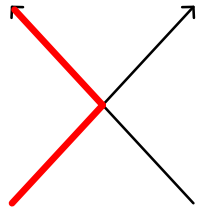}
  \renewcommand{\figurename}{Fig.}
\caption{Color assignment at a classical crossing.}\label{Fig.colorassignment}
\end{figure}

The second condition implies that the edge colors alternate as the orientation
of a component is followed.

In \cite{Dye}, Dye defined a checkerboard framing of a virtual link
diagram $D$. Edges of the virtual link diagram are bounded by classical crossings.
That is, the edges of the virtual link diagram correspond to the edges in its underlying 4-valent graph $|D|$.
A \emph{checkerboard framing} is an assignment of cut points to the edges of a virtual link
diagram (at most one cut point on each edge) and colors to the resulting set of edges. A cut point subdivides an edge into
two edges. Edges of the checkerboard framed diagram are bounded by two classical
crossings or a cut point and a classical crossing. A checkerboard framing satisfies
three following conditions.
%checkerboard framing:加入割点并正常染色边；
\begin{enumerate}
  \item Each edge is assigned a color.
  \item Edge colors alternate.
  \item Edge colors respect crossings (see Fig. \ref{Fig.colorassignment}).
\end{enumerate}

For a checkerboard framing of $D$, we note that for each cut point, if you see it as a 2-valent vertex and each classical crossing as a 4-valent vertex, then its incident edges have two colors.

A \textit{modified checkerboard framing} of a virtual link diagram may contain more
than one cut point on an edge. However, the edge colors respect crossings.

For a virtual link diagram $D$, a checkerboard framing $F$ of $D$ is denoted as
$(D, F)$. The cut point number of $(D, F)$ is the number of cut points in $(D, F)$ and
is denoted as $\mathcal{P}((D, F))$.

\begin{pro}(\cite{Dye})
For all checkerboard framed, virtual link diagrams $(D, F)$, we
have the following statements.
\begin{itemize}
 \item $\mathcal{P}((D, F))$ is even.
  \item If $D$ is a diagram with $n$ crossings, then $0\le \mathcal{P}((D, F)) \le 2n$.
\end{itemize}
\end{pro}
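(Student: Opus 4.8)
The plan is to handle the two statements separately: the range bound is a direct edge count in $|D|$, and the parity is a ``follow a component'' argument.

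First I would treat the bound. The underlying $4$-valent graph $|D|$ has exactly $n$ vertices, one per classical crossing, each of degree $4$ (a loop at a vertex being counted twice), so by the handshaking lemma $|D|$ has $\tfrac12(4n)=2n$ edges. A checkerboard framing puts at most one cut point on each edge of $|D|$, whence $0\le\mathcal P((D,F))\le 2n$; components of $D$ carrying no classical crossing contribute no cut points and may be set aside.

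For the parity, the key step is to traverse a single component $C$ of $D$ as a closed curve and record the color of the edge we are currently on. The color is locally constant and changes exactly at the ``events'' encountered along $C$: at each cut point it changes because the two edges incident to that $2$-valent vertex carry distinct colors, and at each classical crossing it changes by the condition that edge colors respect crossings (Fig.~\ref{Fig.colorassignment}), equivalently that edge colors alternate as the component is followed. Since $C$ closes up, the total number of changes along $C$ is even; that number equals (the number of cut points lying on $C$) plus (the number of passages of $C$ through classical crossings). Summing over all components of $D$, the quantity $\mathcal P((D,F))+\sum_C(\text{passages of }C)$ is even, and since each of the $n$ crossings is passed through exactly twice in total --- twice by one component if it is a self-crossing, once by each of two components otherwise --- the second summand equals $2n$. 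Hence $\mathcal P((D,F))\equiv 2n\equiv 0\pmod 2$.

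The only thing requiring care is this last piece of bookkeeping: that the passages through crossings, summed over all components, contribute $2n$ rather than $n$, which hinges on counting a self-crossing as two passages of its own component; and, secondarily, confirming that the color genuinely flips at every classical crossing along a component, which is exactly what the edge-coloring condition at a crossing says. Beyond that I anticipate no real obstacle.
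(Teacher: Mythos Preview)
Your argument is correct. The paper does not give its own proof of this Proposition (it is cited from Dye), but it does prove the analogous statement for bars in checkerboard colorable twisted link diagrams (Lemma~\ref{Lemma:pDeven}) and explicitly remarks that the argument is just a reformulation of the cut-point case; that proof proceeds by exactly the same idea you use---follow each component, note the color flips at every crossing passage and at every cut point, and exploit that a closed traversal has evenly many flips. Your bookkeeping, summing crossing-passages over all components to $2n$, is a slightly crisper way to handle the multi-component case than the paper's phrasing that ``components of this type occur in pairs,'' but the two arguments are the same in substance.
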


The minimum number of cut points required by a virtual link diagram is

\begin{equation}\label{Dcutpoint}
\mathcal{P}_{d}(D)=Min\{\mathcal{P}((D,F))|F~is~ a~ framing~ of~ D\}.
\end{equation}

Then, $\mathcal{P}(K)$ is the minimum number of cut points required to frame any virtual
link diagram $D$ equivalent to $K$. That is, the cut point number of $K$ is

\begin{equation}\label{Kcutpoint}
\mathcal{P}(K)=Min\{\mathcal{P}((D,F))|D\sim K~ and~ F~ is~ a~ framing~ of~ D\}.
\end{equation}

\begin{thm}(\cite{Dye})
$\mathcal{P}(K)$ is a virtual link invariant.
\end{thm}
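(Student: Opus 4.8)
My plan is to reduce the assertion to two elementary points: that the set over which the minimum in \eqref{Kcutpoint} is taken is a nonempty set of non-negative integers (so that the minimum exists), and that this set depends only on the equivalence class of $K$. Granting these, $\mathcal{P}$ is a virtual link invariant essentially by definition, since a minimum taken over an entire equivalence class is automatically constant on that class.

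The one point that requires an argument is that \emph{every} virtual link diagram $D$ admits at least one checkerboard framing, so that the set in \eqref{Kcutpoint} is nonempty. I would prove this by a local-to-global argument on the underlying $4$-valent graph $|D|$: colour the edge-germs around each classical crossing so that condition~(3) (colours respect crossings, Fig.~\ref{Fig.colorassignment}) holds there --- which is always possible locally --- and then traverse each component, alternating colours along its edges; wherever the alternation condition~(2) or the crossing condition~(3) would be violated, insert a cut point on the offending edge and flip the colour of the short sub-edge on one side, restoring conditions~(1)--(3). This produces a \emph{modified} checkerboard framing with finitely many cut points; any edge carrying two cut points can then be simplified by a cut point move (Fig.~\ref{F:cutpointmove}), which strictly decreases the number of cut points, so after finitely many such steps every edge carries at most one cut point and we have a genuine checkerboard framing of $D$. (This existence is in any case implicit in the preceding Proposition, whose bound $\mathcal{P}((D,F))\le 2n$ presupposes that framings of $D$ exist.) It follows that $\mathcal{P}_d(D)$ in \eqref{Dcutpoint} is a well-defined non-negative integer for every $D$, even and at most $2n$ when $D$ has $n$ crossings.

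For the invariance itself: a virtual link $K$ is by definition an equivalence class of virtual link diagrams under the Reidemeister and virtual Reidemeister moves, so for any representative diagram the collection $\{D : D\sim K\}$ is precisely the set of diagrams representing $K$ and is independent of the chosen representative. Hence $\{\mathcal{P}((D,F)) : D\sim K,\ F \text{ a framing of } D\}$ is a nonempty subset of $\mathbb{Z}_{\ge 0}$ (nonempty by the previous paragraph, applied to any one representative), and therefore $\mathcal{P}(K)$ is a well-defined non-negative integer; replacing $K$ by an equivalent link $K'$ leaves this set unchanged, so $\mathcal{P}(K)=\mathcal{P}(K')$, and $\mathcal{P}$ is a virtual link invariant. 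I expect the only real obstacle to be the existence step --- framing an arbitrary diagram with finitely many cut points and then reducing to at most one cut point per edge --- everything else being formal. In particular, note that no invariance is claimed for the per-diagram quantity $\mathcal{P}_d(D)$, and indeed none can hold, since checkerboard colourability is not preserved by the Reidemeister moves; it is exactly the passage to the minimum over the full equivalence class that converts $\mathcal{P}_d$ into an invariant.
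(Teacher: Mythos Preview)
The paper does not actually prove this theorem: it is quoted verbatim from \cite{Dye} without argument, so there is no ``paper's own proof'' to compare against. Your proposal is correct and is exactly the standard justification one would expect: $\mathcal{P}(K)$ is defined in \eqref{Kcutpoint} as a minimum over all framings of all diagrams in the equivalence class of $K$, so once one knows the indexing set is nonempty (which you supply via a direct construction, and which is in any case implicit in the cited Proposition bounding $\mathcal{P}((D,F))$), invariance is tautological. Your closing remark that $\mathcal{P}_d(D)$ itself need not be invariant is also well placed and matches the paper's own caveat that checkerboard colourability is not preserved under generalized Reidemeister moves.
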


\begin{cor}(\cite{Dye})
For all virtual links $K$, if $\mathcal{P}(K)>0$, then $K$ is not a classical link.
\end{cor}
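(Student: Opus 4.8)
The plan is to prove the contrapositive: if $K$ is a classical link, then $\mathcal{P}(K)=0$. Since $K$ is classical, it has in its equivalence class a virtual link diagram $D$ with no virtual crossings, i.e.\ an honest classical link diagram, and $D\sim K$. So the whole argument reduces to producing one diagram of $K$ that can be checkerboard framed with no cut points.

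First I would invoke the standard fact recalled at the start of this section: a classical link diagram is checkerboard colorable, because the face-adjacency graph of the underlying $4$-valent plane graph $|D|$ is bipartite — equivalently, one may $2$-color the complementary regions of $D$ in the plane (say by the parity of a mod-$2$ winding number), so that two regions sharing an edge receive opposite colors, with the unbounded region white. By the observation of \cite{Kamada02} this is the same as equipping $|D|$ with an alternate orientation in the sense of Fig.~\ref{Fig.alterorietation}.

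Next I would check that such a checkerboard coloring \emph{is} a checkerboard framing of $D$ that uses no cut points: color each edge of $|D|$ by the color of the region lying on a fixed (say, left) side of it. Because $D$ has no virtual crossings there is nothing to obstruct the coloring, and Dye's three conditions for a checkerboard framing hold automatically — each edge is colored, the colors alternate as a component is traversed, and the colors respect crossings as in Fig.~\ref{Fig.colorassignment}. Hence there is a framing $F$ of $D$ with $\mathcal{P}((D,F))=0$, so $\mathcal{P}_{d}(D)=0$ by \eqref{Dcutpoint}; since $D\sim K$, formula \eqref{Kcutpoint} gives $0\le\mathcal{P}(K)\le\mathcal{P}_{d}(D)=0$, i.e.\ $\mathcal{P}(K)=0$. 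Taking the contrapositive yields the corollary. (The quantity $\mathcal{P}(K)$ is well defined by the invariance theorem just established, so the conclusion is diagram-independent.)

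There is essentially no hard step: the only points needing care are spelling out why a classical link diagram is checkerboard colorable (a Jordan-curve / winding-number argument) and matching that coloring precisely with the ``zero cut points'' case of the definition of a checkerboard framing. Both are routine, so the corollary drops out immediately from the invariance of $\mathcal{P}$ together with the checkerboard colorability of classical diagrams.
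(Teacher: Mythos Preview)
Your proof is correct and is precisely the natural argument. The paper itself does not supply a proof of this corollary---it is quoted directly from \cite{Dye}---but your contrapositive approach (a classical diagram is checkerboard colorable, hence admits a framing with zero cut points, so $\mathcal{P}(K)=0$) is exactly the intended reasoning and matches how the surrounding results in \cite{Dye} are organized.
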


\begin{thm}(\cite{Dye})
For all virtual link diagrams $D$, let $v_{d}(D)$ denote the number of
virtual crossings in the diagram. Then
$\mathcal{P}_{d}(D)\le 2v_{d}(D)$.
\end{thm}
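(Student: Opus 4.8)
The plan is to exhibit, for an arbitrary virtual link diagram $D$, a checkerboard framing with at most $2v_d(D)$ cut points; since $\mathcal P_d(D)$ is the minimum over all framings of $D$, the bound then follows. The starting point is the ordinary checkerboard coloring of the plane: forget the over/under and virtual data and view $D$ as a collection of immersed closed curves in the plane, every crossing being an ordinary transverse 4-valent point. The complementary regions of such a curve system are 2-colorable; color them black and white with the unbounded region white. Orienting $D$, let each segment of $D$ (the part between two consecutive crossings of either kind) carry the color recording on which side the black region lies. By construction, at \emph{every} crossing --- classical or virtual --- the resulting segment colors obey the color rule of Fig.~\ref{Fig.colorassignment}, and the color of a strand changes each time it passes through a crossing; in particular, when $v_d(D)=0$ this is already a checkerboard coloring, so $\mathcal P_d(D)=0$.

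The only defect is at the virtual crossings: a checkerboard framing requires a strand to ``go through'' a virtual crossing with its color unchanged, whereas the planar coloring makes the color flip there --- the two regions lying on a fixed side of the strand just before and just after the crossing are adjacent around it, hence oppositely colored. The repair is local: on each of the two strands at a virtual crossing, insert one cut point just past the crossing. A cut point is exactly a color-flip, so the flip forced by the crossing and the flip at the cut point cancel, and the strand now traverses the virtual crossing correctly while nothing else changes. Carrying this out at all virtual crossings spends $2v_d(D)$ cut points in total.

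Next I would verify the framing axioms for this data: every edge is colored; the color rule at classical crossings is untouched; and the color still alternates along each component and closes up consistently, because along any component we have, at each virtual crossing it meets, deleted two color-flips and inserted two cut-point flips, leaving the parity of flips on each component unchanged. A last point: an edge of $|D|$ through several virtual crossings picks up several cut points, so strictly we have produced a \emph{modified} checkerboard framing; but two cut points on one edge cancel --- one of the cut point moves --- and cancelling redundant pairs reduces the number on each edge to $0$ or $1$ while only decreasing the total, yielding a genuine checkerboard framing $F$ with $\mathcal P((D,F))\le 2v_d(D)$, hence $\mathcal P_d(D)\le 2v_d(D)$.

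The main obstacle I anticipate is the bookkeeping in the third step: checking that ``flip at the virtual crossing, then flip at the new cut point'' is precisely what makes a strand pass through a virtual crossing in the sense of a checkerboard framing, and that altering the flip pattern simultaneously at all virtual crossings still leaves every component consistently 2-colored. The existence of the planar 2-coloring and the cancellation of redundant cut points are routine by comparison.
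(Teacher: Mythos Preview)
Your argument is correct and is essentially the standard one: start from the planar checkerboard coloring of the flat projection (all crossings as honest $4$-valent vertices), observe that this already satisfies the color rule at every crossing, and then repair the unwanted color flip at each virtual crossing by inserting one cut point per strand there, for a total of $2v_d(D)$ cut points. The only wording I would tighten is the parity check: a component may meet a given virtual crossing on just one of the two strands, so the correct bookkeeping is ``one planar flip replaced by one cut-point flip per strand-passage through a virtual crossing,'' not ``two and two'' per virtual crossing; the conclusion is unaffected.

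As for comparison with the paper: this theorem is quoted from \cite{Dye} without proof here, so there is no in-paper argument to compare against. The paper does, however, allude to exactly your construction in the twisted-link setting when it remarks that one ``can place all bars near to virtual crossing such that the virtual crossings behave like classical crossings with regard to the checkerboard coloring''; replacing ``bar'' by ``cut point'' gives precisely your framing. So your proof is aligned with the source and with the paper's own later use of the idea.
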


\begin{cor}(\cite{Dye})
For all virtual links $K$, let $v(K)$ denote the minimum number of
virtual crossings in any diagram of $K$, then
$\mathcal{P}(K)\le 2v(K)$.
\end{cor}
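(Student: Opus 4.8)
The plan is to obtain this as an immediate corollary of the preceding theorem, which asserts $\mathcal{P}_{d}(D)\le 2v_{d}(D)$ for every virtual link diagram $D$. The only point beyond that theorem is to notice that the minimum defining $v(K)$ is actually attained: since $v(K)$ is a minimum taken over a set of non-negative integers, well-ordering of $\mathbb{N}$ supplies a diagram $D_{0}$ with $D_{0}\sim K$ and $v_{d}(D_{0})=v(K)$.

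First I would fix such a minimizer $D_{0}$ and apply the theorem $\mathcal{P}_{d}(D)\le 2v_{d}(D)$ with $D=D_{0}$, which yields $\mathcal{P}_{d}(D_{0})\le 2v_{d}(D_{0})=2v(K)$. Next I would unwind the definitions (\ref{Dcutpoint}) and (\ref{Kcutpoint}). By (\ref{Dcutpoint}) there is a framing $F_{0}$ of $D_{0}$ with $\mathcal{P}((D_{0},F_{0}))=\mathcal{P}_{d}(D_{0})$, and since $(D_{0},F_{0})$ is one particular checkerboard framed diagram with $D_{0}\sim K$, it lies in the set over which the minimum in (\ref{Kcutpoint}) is taken. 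Hence $\mathcal{P}(K)\le\mathcal{P}((D_{0},F_{0}))=\mathcal{P}_{d}(D_{0})\le 2v(K)$, which is the claim.

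That chain of inequalities is the entire argument, and there is no real obstacle: all the substance lives in the theorem $\mathcal{P}_{d}(D)\le 2v_{d}(D)$, which is proved by placing a suitable pair of cut points near each virtual crossing so that the edges of the resulting diagram admit a consistent two-coloring. The corollary itself is purely formal, resting only on the monotonicity of the nested minima $\mathcal{P}((D,F))\ge\mathcal{P}_{d}(D)\ge\mathcal{P}(K)$ together with the attainment of $v(K)$ by an honest diagram; the lone sentence meriting care in a self-contained write-up is precisely that $v(K)$ is a minimum and not merely an infimum.
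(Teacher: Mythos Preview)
Your argument is correct and is exactly the natural derivation of the corollary from the preceding theorem $\mathcal{P}_{d}(D)\le 2v_{d}(D)$. The paper itself does not supply a proof here---the corollary is simply quoted from \cite{Dye}---so there is nothing to compare against, but your chain $\mathcal{P}(K)\le\mathcal{P}_{d}(D_{0})\le 2v_{d}(D_{0})=2v(K)$ via a minimizing diagram $D_{0}$ is precisely the intended one-line deduction.
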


A checkerboard framing of a virtual link diagram $D$ can be modified using the cut points moves in Fig. \ref{F:cutpointmove}. Note that Move I can add four cut points to four edges incident to a classical crossing or delete.
The number of cut points on an edge can be reduced to one or zero using Move II resulting in a checkerboard framing.

First we give out a proof of Conjecture \ref{conj1} as follows.
It is obvious that the cut point move I and II does not change the property of checkerboard framing.

\begin{proof}
Let the underlying graph of a virtual link diagram $D$ with $n$ classical crossings be $|D|$ with $n$ 4-valent vertices. Since every edge of $|D|$ will have zero or one cut point, then we can get a set $2^{E(|D|)}$ of $2n$ dimension $0,1$ array modular 2, there are $2^{2n}$ arrays. Denote the sets of all non-checkerboard framings and checkerboard framings by $NCF$ and $CF$, respectively.
For any $f,g\in CF$, we assume that the checkerboard colorings are $\mathcal{C}_{f}$ and $\mathcal{C}_{g}$ corresponding to $f$ and $g$, respectively.
Note that for any $f_{0}\in CF$, there are two checkerboard colorings corresponding to it, we just arbitrarily choose one.
It is obvious that for each classical crossing $c$ of $D$, there are colorings $\mathcal{C}_{f}(c)$ and $\mathcal{C}_{g}(c)$ around its adjacent edges in $\mathcal{C}_{f}$ and $\mathcal{C}_{g}$, respectively.
Then colorings $\mathcal{C}_{f}(c)$ and $\mathcal{C}_{g}(c)$ are same or different.
See one example as illustrated in Fig. \ref{fgexample}.
For checkerboard framing $f$, if we use first move I for all classical crossings $c's$ in where $\mathcal{C}_{f}(c)$ and $\mathcal{C}_{g}(c)$ are different as shown in Fig. \ref{changecolor}, then we can obtain one new checkerboard framing $f'$ and checkerboard coloring $\mathcal{C}_{f'}$.
Note that $\mathcal{C}_{f'}$ and $\mathcal{C}_{g}$ only are different in some edges whose two endpoints are cut-points.
Next we use move II in $f'$ to reduce the cut-points such that the number of cut-point is 0 or 1 in each edge of $D$, in the same time, the checkerboard coloring $\mathcal{C}_{f'}$ transforms to $\mathcal{C}_{g}$, that is, we obtain the checkerboard framing $g$.

\begin{figure}[!htbp]
  \centering
  \includegraphics[width=12cm]{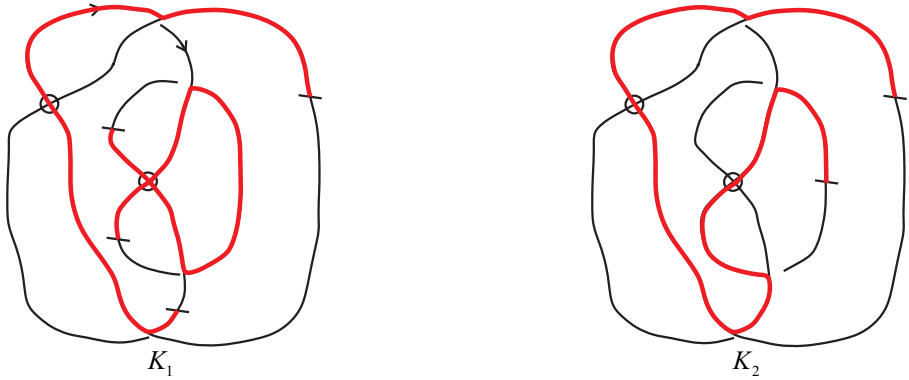}
   \renewcommand{\figurename}{Fig.}
  \caption{Checkerboard framings $f$ and $g$ and their checkerboarding colorings.}
  \label{fgexample}
\end{figure}

\begin{figure}[!htbp]
  \centering
  \includegraphics[width=12cm]{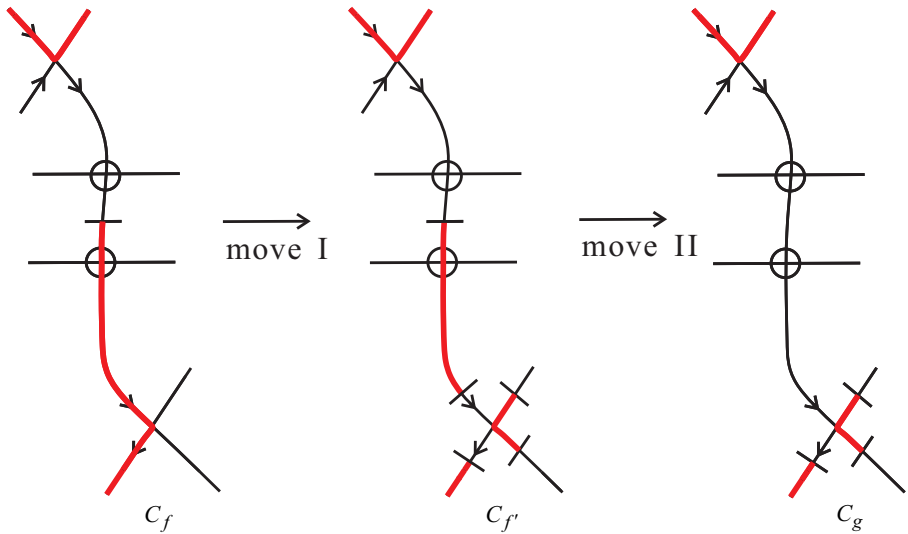}
   \renewcommand{\figurename}{Fig.}
  \caption{Switch the color of a classical crossing.}
  \label{changecolor}
\end{figure}
\end{proof}

\begin{thm}\label{conjtoTh1}
Any two checkerboard framings of a diagram are related by a
sequence of the cut point moves I and II (Fig. \ref{F:cutpointmove}).
\end{thm}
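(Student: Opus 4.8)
The plan is to argue combinatorially on the underlying $4$-valent graph $|D|$, exactly along the lines of the proof of Conjecture \ref{conj1} given above; indeed Theorem \ref{conjtoTh1} is just the formal restatement of that conjecture, so the proof amounts to assembling the ingredients already in place. Fix a virtual link diagram $D$ with $n$ classical crossings and let $f,g$ be two checkerboard framings of $D$, with associated checkerboard colorings $\mathcal{C}_{f}$ and $\mathcal{C}_{g}$ of the corresponding subdivided diagrams. The first point to record is that a checkerboard framing is captured, crossing by crossing, by a local colour pattern: at each classical crossing $c$ the colours $\mathcal{C}_{f}(c)$ and $\mathcal{C}_{g}(c)$ of the four incident edge-germs either coincide or are interchanged, and that these are the only two possibilities (condition (3), that edge colours respect crossings, as in Fig.~\ref{Fig.colorassignment}).

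Next I would show that cut point move I, applied at a single crossing $c$, realizes precisely the interchange of the local colour pattern at $c$: it introduces four new cut points, one on each of the four edges incident to $c$, and switches the colouring there (Fig.~\ref{changecolor}), while leaving the framing untouched away from $c$. Hence, starting from $f$, apply move I at every crossing at which $\mathcal{C}_{f}$ and $\mathcal{C}_{g}$ disagree. The resulting modified checkerboard framing $f'$ then has $\mathcal{C}_{f'}$ agreeing with $\mathcal{C}_{g}$ at every classical crossing, so $\mathcal{C}_{f'}$ and $\mathcal{C}_{g}$ can differ only in the interiors of the edges of $|D|$, i.e.\ only in how many cut points lie on each edge.

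The third step is the clean-up by move II (Fig.~\ref{F:cutpointmove}). On each edge $e$ of $|D|$, compare the number of cut points carried by $f'$ with that carried by $g$. Since edge colours alternate and must match the now-agreeing colours at the two endpoints of $e$, these two numbers have the same parity; move II changes the number of cut points on a given edge by $2$, and performed so as to match, it carries $\mathcal{C}_{f'}$ to $\mathcal{C}_{g}$ along $e$. Running move II over the edges one at a time reduces $f'$ to a framing with the same cut points and the same colouring as $g$, i.e.\ to $g$. Throughout, one uses (as already noted before the proof of Conjecture \ref{conj1}) that moves I and II preserve the property of being a (modified) checkerboard framing, so every intermediate diagram is legitimate, and Move II can moreover bring the cut point count on each edge down to $0$ or $1$.

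The main obstacle I expect is the bookkeeping in the second and third steps rather than any conceptual difficulty: one must check carefully that "interchange the colour pattern at $c$'' is genuinely what the four–cut–point version of move I does (possibly followed by a local move II if one insists on at most one cut point per edge), and that the parity statement in step three is exactly the condition under which move II can equalize the two framings on an edge. The reason both go through is the same defining condition — edge colours respect crossings — which forces the colour along an edge to be determined by its endpoints up to the parity of the number of cut points on it; once that parity is reconciled crossing-first and then edge-by-edge, what remains is a finite sequence of moves I and II, and the theorem follows.
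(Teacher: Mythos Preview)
Your proposal is correct and follows essentially the same approach as the paper: apply Move~I at every classical crossing where the local colourings $\mathcal{C}_f(c)$ and $\mathcal{C}_g(c)$ disagree to obtain an intermediate modified framing $f'$, and then use Move~II edge-by-edge to reconcile the cut-point counts. Your write-up is in fact a bit more explicit about the parity bookkeeping on edges than the paper's version, but the underlying strategy is identical.
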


\subsection{The checkerboard colorability of twisted links}
The faces of a twisted link diagram are closed
curves that run along the immersed curve and have the relationship with the crossings,
virtual crossings, and bars as show in Fig. \ref{facesofdiagram}. At a crossing, a face turns so as to
avoid crossing the link diagram. At a virtual crossing, a face goes through the virtual
crossing. At a bar, a face crosses to the other side of the link diagram.
A twisted link
diagram is \emph{checkerboard colorable}(or \emph{two-colorable} in \cite{Bourgoin}) if its faces can be assigned one of two colors such that the
arcs of the link diagram between two crossings always separate faces of one color from
those of the other as show in Fig. \ref{Colorofcrossingbar}.
\begin{figure}[!htbp]
  \centering
  \includegraphics[width=8cm]{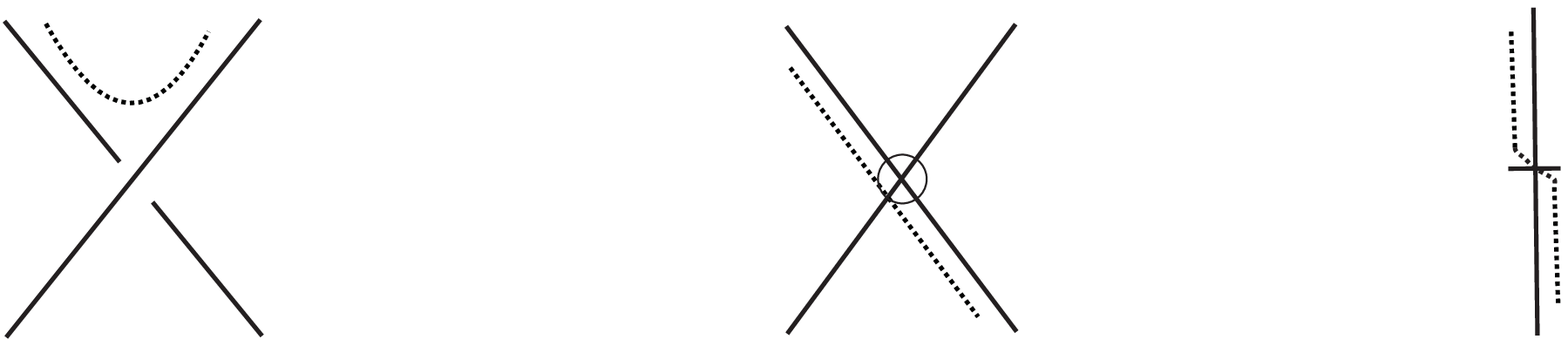}
   \renewcommand{\figurename}{Fig.}
  \caption{Switch the color of a classical crossing.}
  \label{facesofdiagram}
\end{figure}
\begin{figure}[!htbp]
  \centering
  \includegraphics[width=8cm]{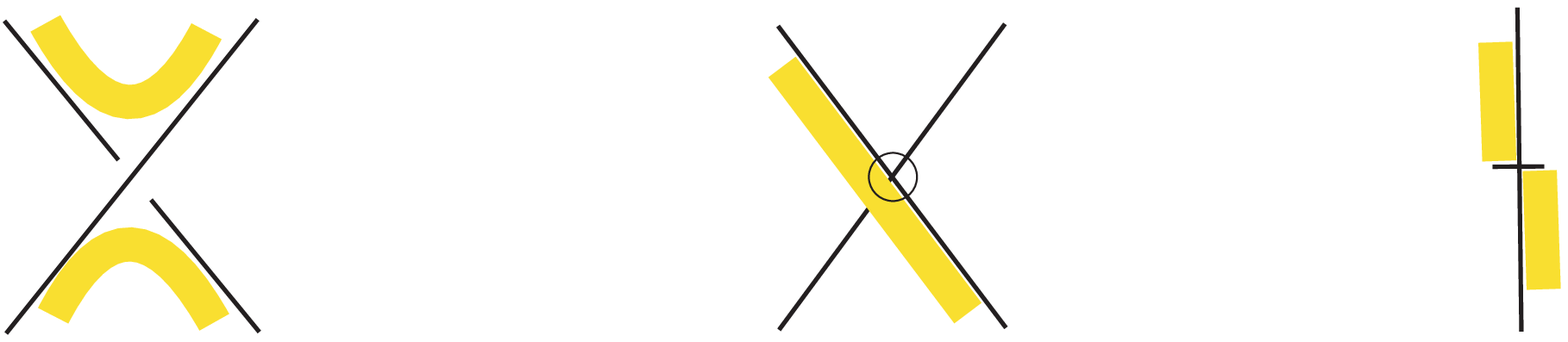}
   \renewcommand{\figurename}{Fig.}
  \caption{Switch the color of a classical crossing.}
  \label{Colorofcrossingbar}
\end{figure}

Note that the three new twisted Reidemeister moves do not change the checkerboard colorability of a twisted link diagram.
But the extended Reidemeister moves may change the checkerboard colorability of a checkerboard colorable twisted link diagram.

Let $D$ be a twisted link diagram, we denote the number of bars on $D$ by $b(D)$.
Then, $b(L)$ is the minimum number of bars contained in any twisted
link diagram $D$ equivalent to $L$. That is, the \emph{bar number} of $L$ is

\begin{equation}\label{barnumber}
b(L)=Min\{b(D)|D\sim L\}.
\end{equation}

\begin{thm}
$b(L)$ is a twisted link invariant.
\end{thm}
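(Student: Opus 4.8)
The plan is to verify that the quantity $b(L)$ defined in \eqref{barnumber} depends only on the twisted link $L$ and not on any chosen representative diagram, which is exactly the assertion of the theorem. The argument is essentially definitional, so I would organise it as three short checks.

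First I would recall that a twisted link $L$ is, by definition, an equivalence class of twisted link diagrams under the extended Reidemeister moves of Fig.~\ref{F:classicalRM}, \ref{RM} and \ref{F:T123}. For a fixed diagram $D$, the number of bars $b(D)$ is visibly a well-defined non-negative integer, since it is simply a count of the bars appearing in the picture and is unaffected by planar isotopy; note, however, that $b(D)$ itself is \emph{not} an invariant, as move T1 deletes a pair of bars while T2 and T3 only slide them. Second, I would confirm that the minimum in \eqref{barnumber} exists: the class $L$ is non-empty, so $\{\,b(D)\mid D\sim L\,\}$ is a non-empty subset of $\mathbb{Z}_{\ge 0}$, and by the well-ordering principle it has a least element; hence $b(L)\in\mathbb{Z}_{\ge 0}$ is well defined.

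Finally, invariance follows at once: if $D_1$ and $D_2$ are twisted link diagrams of the same twisted link, then $D_1\sim D_2$, so $\{\,D\mid D\sim D_1\,\}=\{\,D\mid D\sim D_2\,\}$ and the two minima agree; equivalently, $b$ is by construction constant on each equivalence class. I do not anticipate a genuine obstacle — the only point requiring care is the bookkeeping in the first step, namely checking that ``number of bars'' is a feature of a diagram rather than of a particular drawing of it — and once that is settled the theorem is immediate, in complete parallel with the earlier statement that $\mathcal{P}(K)$ is a virtual link invariant. It is perhaps worth adding the remark that $b(L)=0$ holds precisely when $L$ admits a bar-free diagram, i.e.\ when $L$ is a virtual link, which is what makes $b(L)$ a useful obstruction, just as $\mathcal{P}(K)>0$ detects non-classicality of $K$.
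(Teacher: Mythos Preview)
Your proposal is correct, and there is essentially nothing to compare it against: the paper states this theorem without any proof, treating it as an immediate consequence of the definition~\eqref{barnumber}, exactly in parallel with the cited result that $\mathcal{P}(K)$ is a virtual link invariant. Your three checks make explicit precisely the definitional argument the paper leaves tacit, so your approach is the intended one.

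One very small quibble, not affecting the argument: your aside that ``T2 and T3 only slide them'' is not quite accurate for T3 (which involves four bars on one side of the local picture), but since your point there is merely that $b(D)$ itself is \emph{not} preserved by the extended Reidemeister moves, the precise bar counts under each move are irrelevant to the proof.
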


\begin{Lemma}\label{Lemma:pDeven}
Let $D$ be a checkerboard colorable twisted link diagram with $n$ classical crossings, then the number $b(D)$ of bars on $D$ is even and $0\leq b(D)\leq 2n$ after using move T1 and T2.
\end{Lemma}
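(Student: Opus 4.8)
The plan is to analyze how bars interact with the checkerboard coloring of a twisted link diagram, using the face structure described in Fig.~\ref{facesofdiagram} and Fig.~\ref{Colorofcrossingbar}. First I would set up the relationship between a checkerboard coloring and the combinatorics of the underlying graph. Since $D$ is checkerboard colorable, there is an assignment of two colors to the faces so that each arc separates colors and the local pictures at crossings, virtual crossings, and bars are all consistent. The key observation is that a bar is exactly the place where the two sides of an arc receive opposite behavior relative to the coloring: traversing an arc and passing through a bar, the coloring ``crosses over'' to the other side. So bars function much like cut points do for checkerboard framings of virtual diagrams, and the whole section has been building this analogy.

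The core of the argument I would carry out in two steps. \textbf{Step 1 (parity).} Walk along each component of the link diagram and track the color that lies on, say, the left-hand side of the strand. At a classical crossing the color on a given side alternates in the prescribed way (Fig.~\ref{Fig.colorassignment}/Fig.~\ref{Colorofcrossingbar}); at a virtual crossing nothing changes; at a bar the left/right colors swap. Returning to the starting point, the left-side color must agree with itself, which forces the total number of ``swaps'' encountered along the component — that is, the number of bars on that component, counted with the parity contribution from classical crossings — to satisfy a parity constraint. Because the classical crossings are shared between (possibly the same) components and each classical crossing contributes an even total count over all its incident arc-traversals, summing the constraint over all components shows $b(D)$ is even. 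This is the same bookkeeping that makes $\mathcal{P}((D,F))$ even in the proposition of Dye quoted above, and I would phrase it to parallel that argument.

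\textbf{Step 2 (upper bound).} Here I would use moves T1 and T2 to normalize. Move T1 lets one slide a bar along an arc freely (in particular past a virtual crossing, cf.\ Fig.~\ref{Fig.detourfortl}), and move T2 cancels two adjacent bars on the same arc. So, first use T2 repeatedly to reduce the number of bars on any single arc of $|D|$ to $0$ or $1$; after this normalization there is at most one bar per edge of the underlying $4$-valent graph $|D|$. Since $D$ has $n$ classical crossings, $|D|$ has $2n$ edges (each crossing is $4$-valent, $4n/2 = 2n$), hence $b(D) \le 2n$ after the reduction, and $b(D)\ge 0$ trivially. Combining with Step 1 gives $0 \le b(D) \le 2n$ with $b(D)$ even.

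The main obstacle I anticipate is making Step 1 fully rigorous in the presence of multiple components and self-crossings: one has to be careful that ``the color on the left of the strand'' is well defined globally and that the alternation rule at a classical crossing really contributes an even total when all four arc-germs at that crossing are accounted for over all component traversals. A clean way around this is to reformulate the coloring as a $\mathbb{Z}/2$-valued function (color $=0$ or $1$) on the arcs-with-a-chosen-side and check that the defining local conditions at crossings, virtual crossings, and bars are precisely the statement that a certain $1$-cochain on $|D|$ is a coboundary; the parity of $b(D)$ then drops out of evaluating that cochain on cycles. I would also need to confirm that applying T1 and T2 genuinely preserves checkerboard colorability — but this is exactly the remark made just above the lemma, that the twisted Reidemeister moves do not change checkerboard colorability, so it can be cited rather than reproved.
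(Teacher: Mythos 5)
Your proposal is correct and takes essentially the same route as the paper: the paper's parity argument counts edges of $|D|$ (with bars as $2$-valent vertices) under an alternate orientation, which is just the orientation-language version of your $\mathbb{Z}/2$ color-tracking holonomy around each component, and the upper-bound step is identical (use T1/T2 to get at most one bar per edge of the $2n$-edge graph $|D|$). If anything, your ``sum the per-component parity constraints, noting each crossing is traversed twice'' is a slightly cleaner way to handle the multi-component case than the paper's remark that odd components ``occur in pairs.''
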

\begin{proof}
$D$ has $2n+b(D)$ edges if we view each classical crossing and each bar as a 4-valent vertex and 2-valent vertex, respectively.
If $D$ is a knot, the orientations of continuous two edges for a vertex alternate between ``in'' and ``out''.
For the first edge and the last edge to have different orientations, $D$ must have an
even number of edges, that is, even number $b(D)$ of bars is even.

If $D$ is a link diagram, it is possible for a component to share classical crossings with
other components. This means that the component may have an odd number of
edges.
If the edges of the odd component
have an alternate orientation, then the component has an odd number of bars. However,
components of this type occur in pairs so that the total number of bars is
even.

In the worst case scenario, every edge in its underlying diagram could have a bar after using move T1 and T2. If a
diagram has $n$ crossings, then there would be $2n$ bars. As a result, there is at most a bar on each edge and $0\leq b(D)\leq 2n$.

Since the underlying diagram of $D$ is a virtual link diagram, thus we can adapt the same formulation of Proposition 3.1 in \cite{Dye}.
We only formulate the same statement in different ways.
\end{proof}

\begin{cor}
For all twisted links $L$, if $b(L)>0$, then $L$ is not a virtual link.
\end{cor}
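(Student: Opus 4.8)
The plan is to argue by contraposition: I will show that every twisted link $L$ that \emph{is} a virtual link satisfies $b(L)=0$, whence $b(L)>0$ forces $L$ not to be a virtual link. The key observation is purely definitional. A virtual link diagram is, by definition, exactly a twisted link diagram that happens to carry no bars, and on bar-free diagrams the extended Reidemeister moves reduce to the ordinary Reidemeister moves and the virtual Reidemeister moves. Consequently, if $L$ is a virtual link, then $L$ is represented, as a twisted link, by some virtual link diagram $D_{0}$, and this $D_{0}$ is a twisted link diagram with $D_{0}\sim L$ and $b(D_{0})=0$.

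The remaining step is to feed this into the definition of the bar number. By \eqref{barnumber} we have $b(L)=Min\{b(D)\mid D\sim L\}$; since $D_{0}\sim L$ and $b(D_{0})=0$, this gives $0\le b(L)\le b(D_{0})=0$, so $b(L)=0$. Taking the contrapositive yields exactly the statement of the corollary: if $b(L)>0$, then $L$ cannot be a virtual link. This is the precise analogue, one level up, of the earlier corollary that $\mathcal{P}(K)>0$ forces $K$ to be non-classical, and of the role played by Lemma \ref{Lemma:pDeven} in setting up the edge/bar count.

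The only point that deserves care — and the nearest thing to an obstacle — is the precise meaning of the phrase ``$L$ is a virtual link'' within the category of twisted links: it should be read as ``$L$ lies in the image of the natural map from virtual links to twisted links,'' equivalently ``$L$ admits a twisted link diagram containing no bars.'' One might be tempted to worry about whether this map is injective (this is the subject of the Kamada--Kamada result cited in the introduction), but injectivity is irrelevant here: we only need the existence of a bar-free representative, not uniqueness of a virtual preimage. With that reading fixed, no further work is required and the corollary is immediate.
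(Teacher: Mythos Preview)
Your argument is correct and is exactly the intended one: the paper states this corollary without proof, treating it as immediate from the definition of $b(L)$ in \eqref{barnumber}, and your contrapositive unpacking (a virtual link has a bar-free diagram, hence $b(L)=0$) is precisely that immediate argument. Your side remark about not needing injectivity of the virtual-to-twisted map is a nice clarification, though the reference to Lemma~\ref{Lemma:pDeven} is unnecessary here since that lemma plays no role in this particular corollary.
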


\begin{cor}
For all twisted links $L$, let $v(L)$ denote the minimum number of
virtual crossings in any diagram of $L$, then
$b(L)\le 2v(L)$.
\end{cor}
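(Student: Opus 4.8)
The plan is to follow the template of Dye's proof that $\mathcal{P}(K)\le 2v(K)$ for virtual links, replacing the auxiliary device of cut points by the genuine diagrammatic device of bars. It suffices to prove the diagram-level statement that every twisted link $L$ admits a diagram $D''$ with $v_d(D'')=v(L)$ and $b(D'')\le 2v(L)$; granting this, $b(L)\le b(D'')\le 2v(L)$ is immediate from \eqref{barnumber}. So fix a diagram $D$ of $L$ realizing the virtual crossing number, $v_d(D)=v(L)$, and let $D_0$ be the virtual link diagram obtained from $D$ by erasing its bars.

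First I would run Dye's checkerboard-framing construction on $D_0$: delete a small disk around each virtual crossing, leaving a diagram that is locally classical and hence admits a consistent edge colouring, then reinsert the disks, placing cut points on the edges entering each disk so that the colours match straight through the virtual crossing (a face runs straight through a virtual crossing, Fig. \ref{facesofdiagram}). Since a virtual crossing meets only two strands, at most two cut points are needed per virtual crossing — this is exactly Dye's bound $\mathcal{P}_d(D_0)\le 2v_d(D_0)=2v(L)$. Now reinterpret each cut point as a bar: a cut point marks a place where the checkerboard colouring is permitted to switch, and a bar marks a place where a face crosses to the other side of the diagram (Fig. \ref{Colorofcrossingbar}), so the substitution turns $(D_0,F)$ into a checkerboard colorable twisted link diagram carrying at most $2v(L)$ bars, all on edges incident to the virtual crossings and at most two per virtual crossing.

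The remaining — and hardest — step is to certify that this bar-decorated diagram actually represents $L$, since adding a bar changes the twisted link type in general, so one is not free, as in Dye's setting, to sprinkle in new bars wherever the colouring asks for them. I would instead run the construction on $D$ itself, carrying its original bars along: choose an orientation of $L$, traverse each component once while maintaining a running $\mathbb{Z}/2$ colour, and use the twisted Reidemeister moves T1--T3 of Fig. \ref{F:T123}, the detour move with a bar of Fig. \ref{Fig.detourfortl}, and cancellation of adjacent bar pairs via T2 to push every bar either into a cancelling pair or onto an edge abutting a virtual crossing whose two strands received incompatible colours. Each such virtual crossing then absorbs at most one bar from each of its two strands, so at most $2v(L)$ bars survive, and the parity statement of Lemma \ref{Lemma:pDeven} keeps the running total even so that it can be driven down to this bound. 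The main obstacle is precisely this bookkeeping: proving that T1, T2, T3 and the detour move always suffice to funnel every bar to one of the designated positions — so that no bar is topologically trapped on an edge far from every virtual crossing — and coupling this funnelling with the greedy colouring so that the two-per-crossing count is genuinely attained rather than merely an upper estimate for some other configuration.
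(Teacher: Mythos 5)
The paper offers no written proof of this corollary --- it appears as a verbatim transplant of Dye's bound $\mathcal{P}(K)\le 2v(K)$ with cut points replaced by bars, followed only by the informal remark that bars can be placed near virtual crossings --- so there is no paper argument against which to compare your attempt.

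Your proposal has a genuine gap, and you flag it yourself at the end; I do not believe it can be closed along the lines you sketch. You correctly identify that the step inherited from Dye breaks down because bars, unlike cut points, are part of the topological type of the twisted link, so one cannot freely insert them wherever the colouring demands. Your fallback is to ``funnel'' the bars already present in a minimal-$v$ diagram of $L$ onto arcs incident to virtual crossings via T1--T3 and the bar-detour move. But the twisted Reidemeister moves provide no mechanism for sliding a \emph{single} bar past a classical crossing: T1 only passes a bar through a virtual crossing, T2 only annihilates adjacent pairs, and T3 needs a companion bar on the other strand. Creating such a companion by running T2 backwards and splitting the pair across the crossing leaves the bar count on the original side unchanged, so a bar sitting on an arc bounded at both ends by classical crossings, with no nearby virtual crossing and no partner bar, is topologically stuck. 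The funnelling-plus-greedy-colouring plan therefore cannot reach the two-per-virtual-crossing configuration, and $b(L)\le 2v(L)$ does not follow. Indeed the onefoil of Fig.~\ref{F:twisted knot} is drawn with one bar and no virtual crossings and is not a virtual knot, so it has $v(L)=0$ while $b(L)>0$; the obstruction you hit is not merely a missing lemma but points to a defect in the statement itself as transplanted from the cut-point setting.
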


Note that if $D$ is a non-checkerboard colorable twisted link diagram, then $b(D)$ is odd or even.
We can place all bars near to virtual crossing such that the virtual crossings behave like classical crossings with regard to the checkerboard coloring.

\begin{cor}
Let $L$ be a checkerboard colorable twisted link.
For any diagrams $D$ of $L$, then $b(D)$ is even.
\end{cor}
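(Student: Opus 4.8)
The plan is to combine Lemma \ref{Lemma:pDeven} with the observation that the parity of the number of bars is unchanged by every extended Reidemeister move; in other words, the parity of $b(D)$ is a twisted link invariant, and on a checkerboard colorable link this invariant is forced to be $0$ by Lemma \ref{Lemma:pDeven}.

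First, since $L$ is checkerboard colorable, by definition there is a checkerboard colorable twisted link diagram $D_{0}$ with $D_{0}\sim L$. Applying Lemma \ref{Lemma:pDeven} to $D_{0}$ shows that $b(D_{0})$ is even. Here one should note that the parity conclusion of that lemma does not rely on first performing the moves T1 and T2: those moves are invoked only to obtain the bound $b\le 2n$ (at most one bar per edge), whereas the assertion that $b(D_{0})$ is even comes from the alternate-orientation parity argument, which applies to $D_{0}$ exactly as it stands.

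Second, I would verify that if two twisted link diagrams $D$ and $D'$ differ by a single extended Reidemeister move, then $b(D)\equiv b(D')\pmod 2$. The classical Reidemeister moves (Fig. \ref{F:classicalRM}), the virtual Reidemeister moves (Fig. \ref{RM}) and the detour move, including the detour move carrying a bar (Fig. \ref{Fig.detourfortl}), involve no bars and leave $b$ unchanged. Each of the twisted Reidemeister moves (Fig. \ref{F:T123}) either slides a bar past a virtual crossing, leaving $b$ unchanged, or creates or cancels a pair of adjacent bars on an arc, changing $b$ by $\pm 2$. In every case the change in $b$ lies in $\{0,\pm 2\}$, so the parity of $b$ is preserved.

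Combining these two facts, for an arbitrary twisted link diagram $D$ with $D\sim L$ there is a finite sequence of extended Reidemeister moves from $D_{0}$ to $D$ along which $b\bmod 2$ is constant, whence $b(D)\equiv b(D_{0})\equiv 0\pmod 2$. I do not expect a genuine obstacle here: the only points requiring care are the exhaustive check in the second step that none of T1, T2, T3 (nor any bar-carrying detour move) changes the bar number by an odd amount, and the remark that the parity half of Lemma \ref{Lemma:pDeven} is logically independent of the T1/T2 reduction used there to bound $b$.
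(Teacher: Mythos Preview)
Your argument is correct and is exactly the approach the paper takes: apply Lemma~\ref{Lemma:pDeven} to a checkerboard colorable representative $D_0$, then use that the extended Reidemeister moves preserve the parity of $b$. One small inaccuracy: the move T3 (the one involving a classical crossing and four bars, reflected in the braid relation $b_ib_{i+1}\sigma_i b_ib_{i+1}=v_i\sigma_i v_i$) changes $b$ by $\pm 4$, not $\pm 2$, so your claimed set $\{0,\pm 2\}$ should be $\{0,\pm 2,\pm 4\}$; this of course does not affect the parity conclusion.
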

\begin{proof}
$b(D)$ is even for checkerboard colorable diagram $D$ by Lemma \ref{Lemma:pDeven}.
Based on the extended Reidemeister moves do not change the parity of the number of bars, we can obtain the conclusion.
\end{proof}

Note that the converse of this corollary is not true since there are non-checkerboard colorable virtual link diagram with zero bar.

We shall conclude this section by figuring out the connection and difference between Dye's checkerboard framing and checkerboard coloring of twisted link diagram.
It is obvious that we can construct a bijection between Dye's checkerboard framing and checkerboard coloring of the corresponding twisted link diagram.
For a virtual link diagram $D$, let $F$ be a checkerboard framing of $D$. Then we can obtain a checkerboard colorable twisted link diagram $D'$ from $F$ by replacing each cut-point with a bar. Vice versa.
Dye's checkerboard framing is a combinatorial object, but checkerboard colorability of twisted link diagram is the property of the twisted link diagram.
Thus, if we just focus on the checkerboard colorability of a diagram, then the role of cut-points in virtual link diagram and the role of bars in a twisted link diagram are the same.
But bars in a twisted link diagram contain the topological construct of the diagram.

Let $D$ be a virtual link diagram.
Let $K_{1}$ and $K_{2}$ be two checkerboard colorable twisted link diagrams obtained by adding some bars to $D$.
Then $K_{1}$ and $K_{2}$ maybe inequivalent as twisted links.
See example of a pair of $K_{1}$ and $K_{2}$ in Fig \ref{fgexample}.
By computing their normalized arrow polynomials (see Section 3),
$\langle K_1 \rangle_{NA}=\frac{-\left(A^4+1\right) \text{K1}^2+A^4+A^2+1}{A^2}$, $\langle K_2 \rangle_{NA}=\frac{-\left(A^8+2 A^6+2 A^4+2 A^2+1\right) \text{K1}^2+\left(A^8+2 A^6+3 A^4+2 A^2+1\right)}{A^4}$, then they are not equivalent by Theorem \ref{invofNA}.
Thus we can obtain a lot inequivalent of twisted links from $D$ by adding bars on diagrams.

Theorem \ref{conjtoTh1} is true for checkerboard framings of a virtual link diagram, but
the above example of a pair of $K_{1}$ and $K_{2}$ illustrates that it is not true if the cut-point is replaced with bars for twisted link diagrams and the cut point moves I and II are replaced with move T1, T2 and T3, since the move T3 may change the topological construct of the diagram.

\section{Arrow polynomials of twisted links}\label{exarrowpoly}
In this section, we recall arrow polynomial of an oriented virtual link diagram defined by Dye and Kauffman \cite{DKArrow} which is a criterion of detecting checkerboard colorability of virtual links in \cite{DJK2020}.

Let $D$ be an oriented virtual link diagram. We shall denote the arrow polynomial
by the notation $\langle D\rangle_{A}$.
The \emph{arrow polynomial} of $D$ is based on the oriented state expansion as shown in Fig. \ref{Fig.arrow}.
Note that each classical crossing has two types of smoothing, one \emph{oriented smoothing} and other \emph{disoriented smoothing}.
Each disoriented smoothing gives rise to a \emph{cusp pair} where each \emph{cusp} is denoted by an angle with arrows either both entering the vertex or both leaving the vertex.
Furthermore, the angle locally divides the plane into two parts: One part is the span of an acute angle (of size less than $\pi$); the other part is the span of an obtuse angle.
The \emph{inside} of the cusp denotes the span of the acute angle.
It is obvious that the total number of cusps of a state circle after smoothing all classical crossing will be even according to the orientations on the edges of the state circle.
The structure with cusps is reduced according to a set of rules that ensures invariance of the state summation under generalized Reidemeister moves.
The basic conventions for this simplification are shown in Fig. \ref{Fig.reduction}.
When the insides of the cusps are on opposite sides of the connecting
segment (a ``zig-zag''), then no cancellation is allowed. Each state circle is seen as a \emph{circle graph} with extra nodes corresponding to the cusps. All graphs are taken up to virtual equivalence, as explained above. Fig. \ref{Fig.reduction} illustrates the simplification of three circle graphs. In one case the graph reduces to a circle with no vertices. In the other case there is no further cancellation, but the graph is equivalent to one without a virtual crossing.

Note that the virtual crossings in a state have the potential to effect the total number of cusps.
Use the reduction rule of Fig. \ref{Fig.reduction} so that each state
is a disjoint union of \emph{reduced circle graphs}.
Since such graphs are planar, each is equivalent to
an embedded graph (no virtual crossings) via the detour move, and the reduced forms of such
graphs have $2n$ vertices that alternate in type around the circle so that $n$ are pointing inward (the angle of the cusp is obtuse in the inside of the circle) and
$n$ are pointing outward (the angle of the cusp is acute in the inside of the circle). The circle with no vertices is evaluated as $d=-A^{2}-A^{-2}$ as is usual for these expansions, and the circle is removed from the graphical expansion.
Let $K_{n}$ denote the
circle graph with $2n$ alternating vertex types as shown in Fig. \ref{Fig.reduction} for $n=1$ and $n=2$. Each
circle graph contributes $d=-A^{2}-A^{-2}$ to the state sum and the graphs $K_{n}$ for $n\geq1$ remain in the graphical expansion. Each $K_{n}$ is an extra variable in the polynomial. Thus a product of the
$K_{n}$'s corresponds to a state that is a disjoint union of copies of these circle graphs.
Note that we continue to use the caveat that an isolated circle or circle graph (i.e. a state consisting in a single circle or single circle graph) is assigned a state circle value of
unity in the state sum. This assures that $\langle D\rangle_{A}$ is normalized so that the
unknot receives the value one.
Note that the arrow polynomial will reduce to the classical bracket polynomial
when each of the new variables $K_{n}$ is set equal to unity.

\begin{remark}\label{importantstatement}
Formally, we have a state summation for the arrow polynomial of a twisted link diagram analogously to that of the virtual link diagram.
For a state of $D$, there maybe exist a circle graph $G$ with cusps and bars.
For a twisted link diagram $D$, we keep above rules and add some new rules for a state.
If a reduced circle graph contains a cusp with two bars near it, we will reduce it as shown in Fig. \ref{Fig.cuspbar}.
Generally, for a circle graph $G$ with cusps $1,{2},\cdots,{2n}$ and several bars, if we consider a pair of bars, that is, one bar $A$ (resp. $B$) between cusps $i$ and $i+1$ (resp. $j-1$ and $j$), we assume that $i<j$ and $j-i\leq n$ without loss generality.
Then we can insert a pair of bars between cusps $i+k$ and $i+k+1$ ($1\leq k\leq j-i-2$), use the reduction rule of a cusp with two bars as shown in Fig. \ref{Fig.cuspbar}.
That is, we will obtain a new circle graph $G'$ with new cusps information but without bars.
Then we will reduce $G'$ to $K_{1},K_{2},...$ if the number of bars in $G$ is even or a trivial loop with a bar if the number of bars in $G$ is odd (as shown in Fig. \ref{Fig.K1bar}) .
\end{remark}
\begin{figure}[!htbp]
  \centering
  \includegraphics[width=6cm]{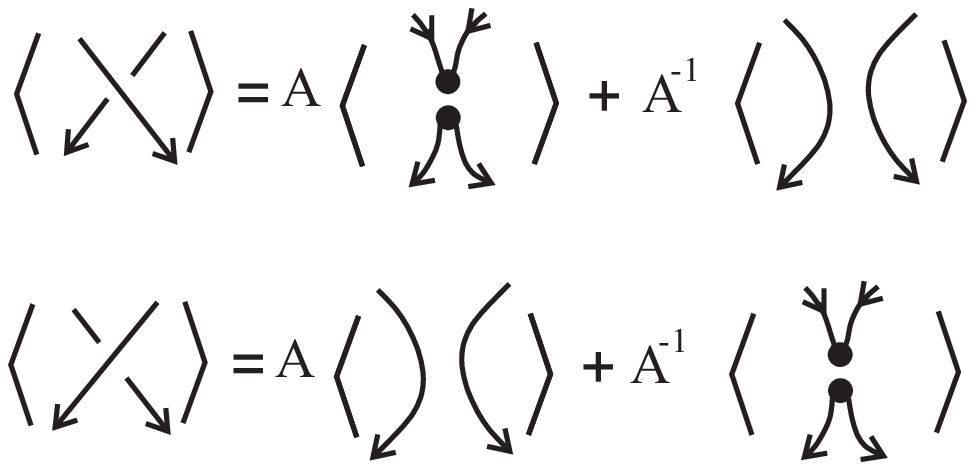}
    \renewcommand{\figurename}{Fig.}
  \caption{Oriented state expansion.}
  \label{Fig.arrow}
\end{figure}

\begin{figure}[!htbp]
  \centering
  \includegraphics[width=8cm]{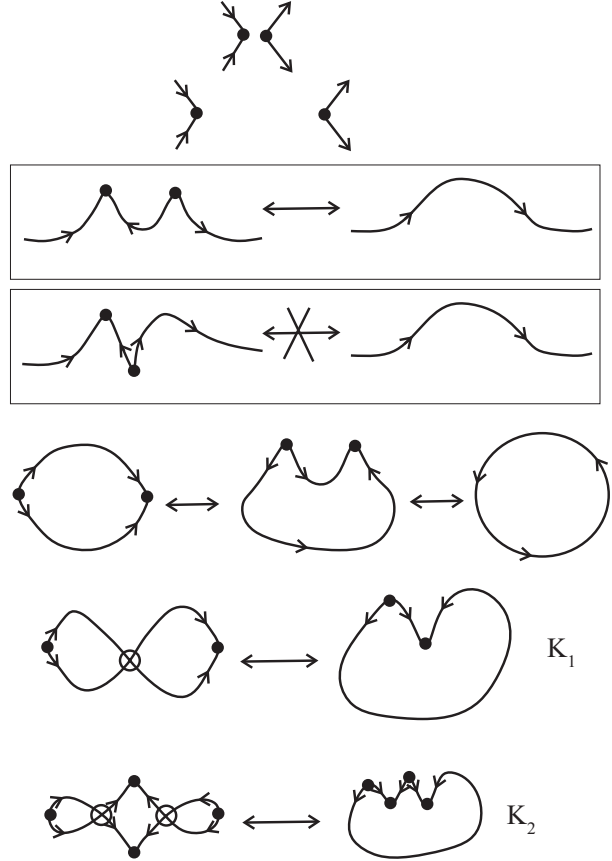}
    \renewcommand{\figurename}{Fig.}
  \caption{Reduction rule for the arrow polynomial.}
  \label{Fig.reduction}
\end{figure}

\begin{figure}[!htbp]
  \centering
  \includegraphics[width=12cm]{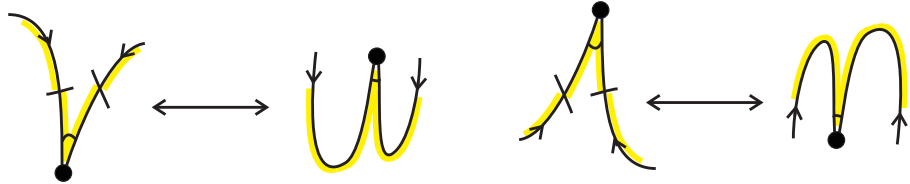}
    \renewcommand{\figurename}{Fig.}
  \caption{Reduction rule of a cusp with two bars for the arrow polynomial.}
  \label{Fig.cuspbar}
\end{figure}

\begin{figure}[!htbp]
  \centering
  \includegraphics[width=12cm]{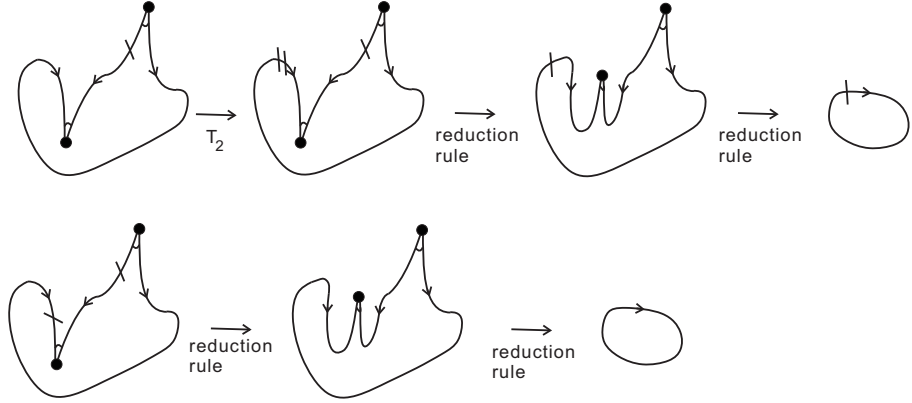}
    \renewcommand{\figurename}{Fig.}
  \caption{For the arrow polynomial, $K_1$ with one bar becomes a trivial loop with one bar by using T2 and reduction rule of a cusp with two bars, $K_1$ with two bars becomes a trivial loop by using T2 and reduction rule of a cusp with two bars,.}
  \label{Fig.K1bar}
\end{figure}

\begin{figure}[!htbp]
  \centering
  \includegraphics[width=8cm]{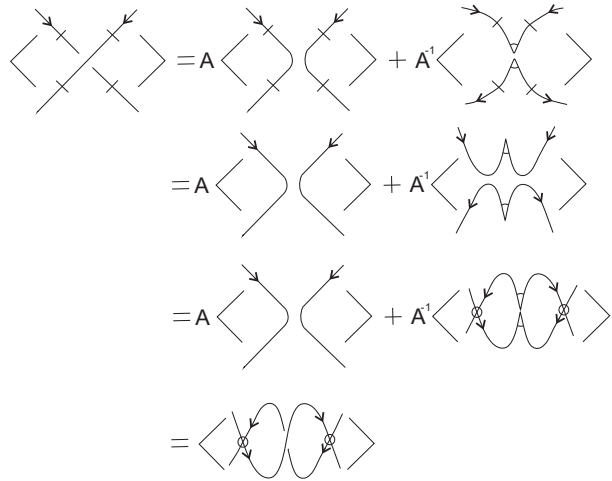}
    \renewcommand{\figurename}{Fig.}
  \caption{The invariance under move T3 for the arrow polynomial.}
  \label{Fig.T3cuspbar}
\end{figure}

\begin{definition}(\cite{DKArrow})
The arrow polynomial $\langle D\rangle_{A}(A,M)$ of an oriented twisted link diagram $D$ is defined by

\begin{equation}\label{arrowpoly}
% \nonumber to remove numbering (before each equation)
  \langle D\rangle_{A}(A,M)=\sum_{S}A^{\alpha-\beta}d^{|S|-1}\langle S\rangle
\end{equation}

\noindent where $S$ runs over the oriented bracket states of the diagram, $\alpha$ denotes the number of smoothings
with coefficient $A$ in the state $S$ and $\beta$ denotes the number with coefficient
$A^{-1}$, $d=-A^{2}-A^{-2}$, $|S|$ is the number of circle graphs in the state, and
$\langle S\rangle$ is a product of extra variables $K_{1},K_{2},...$ and $M^{o(S)}$ associated with the non-trivial circle graphs where $o(S)$ is the number of trivial loops with a bar in the state $S$.
Note that each circle graph (trivial or not) contributes to the power of
$d$ in the state summation, but only non-trivial circle graphs contribute to
$\langle S\rangle$.
\end{definition}

In \cite{Bourgoin}, the writhe of twisted link diagram is a regular invariant of twisted link, that is, invariant under all extended Reidemeister moves except move RI.
Let $D$ be an oriented twisted link diagram with writhe $\omega(D)$.
The normalized version is defined by
\begin{equation}\label{Norarrowpoly}
% \nonumber to remove numbering (before each equation)
  \langle D\rangle_{NA}(A,M)=(-A^{3})^{-\omega(D)}\langle D\rangle_{A}(A,M).
\end{equation}

For an oriented twisted link $L$ represented by an oriented link diagram $D$, we denote $\langle D\rangle_{NA}(A,M)$ by $\langle L\rangle_{NA}(A,M)$, and call it the \emph{normalized arrow polynomial} of $L$.

Note that there will not be bars in an oriented virtual link diagram, so does every state circle graph. Hence we will have the following conclusion as shown in \cite{DKArrow}.

\begin{thm}(\cite{DKArrow}, Theorem 1.6)\label{arrowinvar}
Let $L$ be an oriented virtual link.
Then $\langle L\rangle_{NA}(A,M)$ is
an invariant under the classical Reidemeister moves and virtual Reidemeister
moves.
\end{thm}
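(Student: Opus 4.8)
The plan is to reduce to the original argument of \cite{DKArrow}. Since $L$ is a virtual link, every diagram $D$ of $L$ contains no bars, so no state $S$ in the expansion \eqref{arrowpoly} has a cusp adjacent to a bar or a trivial loop carrying a bar; in particular $M$ does not occur and $\langle D\rangle_A(A,M)$ coincides with the arrow polynomial of \cite{DKArrow}, so the extra conventions of Remark~\ref{importantstatement} play no role. It then suffices to show that the unnormalized polynomial $\langle D\rangle_A$ is unchanged by the virtual Reidemeister moves (equivalently, by the detour move) and by Reidemeister moves II and III, and that a single Reidemeister~I move multiplies it by $-A^3$ or $-A^{-3}$ according to the sign of the kink. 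Since $\omega(D)$ is invariant under all of these moves except Reidemeister~I, the factor $(-A^3)^{-\omega(D)}$ in \eqref{Norarrowpoly} then makes $\langle D\rangle_{NA}$ invariant under all classical and virtual Reidemeister moves.

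The first step is to make the evaluation of a single state unambiguous. Applying the reduction rules of Fig.~\ref{Fig.reduction} to a state $S$ yields a disjoint union of reduced circle graphs, each of which is, up to the detour move, an embedded circle carrying $2n$ alternating cusps for some $n\ge 0$: the case $n=0$ contributes the loop value $d=-A^2-A^{-2}$, and the case $n\ge 1$ contributes the formal variable $K_n$, with the usual normalization that a state consisting of a single circle graph is assigned circle-value $1$. I would check that this normal form is well defined, i.e. that the multiset of $K_n$'s and the exponent of $d$ attached to $S$ do not depend on the order in which the local reductions are performed and are unchanged by detour moves; the delicate point is the ``zig-zag'' configuration, for which Fig.~\ref{Fig.reduction} forbids cancellation, so that the rules are confluent and the normal form is a genuine invariant of $S$.

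With this in hand I would go through the moves. For each virtual Reidemeister move and the detour move there is nothing to prove at the level of states: no classical crossing is created or destroyed, so the states before and after are in canonical bijection, and corresponding states differ only by a detour move applied to their circle graphs, which leaves the evaluation unchanged by the previous step. For Reidemeister~II I would expand the two crossings into their four oriented/disoriented resolutions and collect coefficients exactly as for the classical Kauffman bracket, using $d=-A^2-A^{-2}$; the only new feature is that the disoriented resolutions create cusp pairs that cancel by the same-side reduction rule, so the arrows do not obstruct the classical cancellation and the sum returns the original tangle. Both the parallel and antiparallel orientation cases, and both sign patterns, have to be run through, but they are parallel computations. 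Reidemeister~III then follows by the standard reduction: resolve one of the three crossings in each of the two diagrams, observe that the resulting pair of diagrams is related by Reidemeister~II moves and detour moves, and invoke the invariance already proved. Finally, for Reidemeister~I, resolving the added kink gives one term that is the original strand together with a disjoint circle (a factor $d$) and one term in which a cusp pair appears on a short arc and cancels by the same-side rule; the net scalar is $-A^{3}$ for a positive kink and $-A^{-3}$ for a negative one, exactly the factor that $(-A^3)^{-\omega(D)}$ is designed to absorb.

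The main obstacle is the cusp bookkeeping shared by the well-definedness of the state normal form and the Reidemeister~II and III cases: one must verify that the precise conventions of Fig.~\ref{Fig.reduction}, namely cancellation only when the insides of the two adjacent cusps lie on the same side of the connecting segment and never in a zig-zag, are at once strong enough to force the Reidemeister~II and I cancellations and weak enough to keep the normal form of a state, hence the variables $K_n$, well defined and mutually independent. Once this compatibility is established the remaining verifications are routine and the theorem follows.
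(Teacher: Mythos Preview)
Your proposal is correct and follows the same approach as the paper: observe that a virtual link diagram carries no bars, so the extra conventions of Remark~\ref{importantstatement} are vacuous and $\langle D\rangle_{A}(A,M)$ reduces to the ordinary arrow polynomial of \cite{DKArrow}, whence the invariance is exactly Theorem~1.6 there. The paper in fact gives no further argument beyond this reduction and the citation; your sketch of the cusp bookkeeping and the move-by-move verification simply reproduces the content of \cite{DKArrow} that the paper takes for granted.
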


\begin{thm}\label{invofNA}
Let $L$ be an oriented twisted link.
Then $\langle L\rangle_{NA}(A,M)$ is
an invariant under the extended Reidemeister moves.
\end{thm}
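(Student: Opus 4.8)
The plan is to verify invariance under each of the extended Reidemeister moves in turn, using Theorem~\ref{arrowinvar} as a black box so that only the new ingredient --- bars in the diagram, and hence in the state circle graphs --- has to be treated. Two preliminary reductions shrink the work. First, the state expansion \eqref{arrowpoly} ranges over all smoothings of the classical crossings and is visibly independent of the order in which the crossings are processed, so no coherence issue arises from the smoothing step itself. Second, as recalled in the excerpt, the writhe $\omega(D)$ is a regular invariant of the twisted diagram (\cite{Bourgoin}): it is unchanged by RII, RIII, the virtual moves and the twisted moves T1, T2, T3, and it changes by $\pm 1$ under RI. Consequently the normalizing factor $(-A^{3})^{-\omega(D)}$ in \eqref{Norarrowpoly} plays a role only for RI, where one checks, exactly as for the classical bracket, that smoothing an RI kink produces one oriented and one disoriented smoothing whose cusps cancel via Fig.~\ref{Fig.reduction} (no bar being involved), so that $\langle D\rangle_{A}$ is multiplied by $-A^{\pm 3}$ and $\langle D\rangle_{NA}$ is left invariant. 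It therefore remains to prove: (i) the bar reduction rules of Remark~\ref{importantstatement} assign a well-defined value to every reduced circle graph carrying cusps and bars; and (ii) $\langle D\rangle_{A}$ is invariant under RII, RIII, the virtual moves, and T1, T2, T3.

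For (i) I would show that the procedure of Remark~\ref{importantstatement} --- inserting canceling pairs of bars and applying the cusp-with-two-bars rule of Fig.~\ref{Fig.cuspbar} together with move T2 --- is confluent: the monomial in $A^{\pm 1}$, the $K_{n}$'s and $M$ that it outputs is independent of which pair of bars one treats first and of where the auxiliary canceling pairs are inserted. The cleanest route is to note that every elementary step is a local modification of the reduced circle graph realizing a twisted Reidemeister move restricted to that circle, so the output depends only on the twisted-isotopy type of the circle-graph-with-bars; a parity argument of the kind used in Lemma~\ref{Lemma:pDeven} then shows this type is pinned down by the alternating cusp data together with the parity of the number of bars --- an even number reducing to some $K_{n}$ (with no $M$), an odd number to a trivial loop carrying one bar (contributing $M$), as in Fig.~\ref{Fig.K1bar} --- with the bar-free part of the reduction being the one already justified in \cite{DKArrow}.

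For (ii), the moves that are local in a bar-free region --- RII, RIII, and the virtual moves performed away from any bar --- go through verbatim as in Dye--Kauffman, since the local state contributions are identical and the (possibly barred) circle graphs outside the move region are untouched; a virtual move or detour move running along an arc that does carry a bar is reduced to this case by first sliding the bar aside using T1 (Fig.~\ref{Fig.detourfortl}). For T1 one matches states directly: pushing a bar past a virtual crossing sends each state of one side to a state of the other with the bar relocated accordingly, and the two circle graphs are twisted-isotopic, hence evaluate equally by (i). For T2 one pairs states likewise; the two cancelled bars always lie on the same local arc in every state, so they are either adjacent and annihilate by T2 on the circle graph or are made adjacent by (i), and the variable $M$ together with the count $o(S)$ in the definition records exactly the trivial-loop-with-a-bar case so that the bookkeeping matches term by term.

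The main obstacle is move T3, together with the confluence claim in (i) on which it relies. In T3 a bar is dragged through a classical crossing --- the one twisted move that genuinely intertwines the bar data with the crossing data, which is precisely why, as noted in the text, the analogue of Theorem~\ref{conjtoTh1} fails once cut points are replaced by bars. Here I would smooth the crossing in both ways on each side of the move and check, case by case according to the position of the bar relative to the cusp created by a disoriented smoothing, that the resulting local circle-graph fragments reduce to equal values; this is the content of Fig.~\ref{Fig.T3cuspbar}. The delicate subcase is the one in which the bar comes to lie next to a cusp, where the equality is seen only after inserting an auxiliary canceling pair of bars and applying the cusp-with-two-bars rule --- so it is exactly here that the well-definedness established in (i) is doing the real work. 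Once every local fragment is matched, summing over states and over the untouched circle graphs gives invariance of $\langle D\rangle_{A}$ under T3, and assembling (i), (ii), and the two preliminary reductions shows that $\langle L\rangle_{NA}(A,M)$ is invariant under all extended Reidemeister moves, as claimed.
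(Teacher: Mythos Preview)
Your proposal is correct and follows essentially the same route as the paper: invoke Theorem~\ref{arrowinvar} for the classical and virtual moves, then verify the twisted moves by expanding the local state sums, with T3 (handled via the smoothing-by-smoothing comparison of Fig.~\ref{Fig.T3cuspbar}) as the only nontrivial case. The paper's own proof is in fact a two-line sketch that asserts only the T3 check, so your treatment is considerably more thorough --- in particular your confluence analysis (i) of the bar-reduction procedure in Remark~\ref{importantstatement} and your explicit handling of T1 and T2 supply details the paper leaves implicit.
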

\begin{proof}
It is sufficient to check the invariance under move T3 according to Theorem \ref{arrowinvar}.
It is easy to check that the arrow polynomial of twisted link diagram is invariant under move T3 as shown in Fig. \ref{Fig.T3cuspbar}.
\end{proof}

And it is obvious that we can get the following conclusion.
\begin{thm}
Let $D$ be an oriented twisted link diagram. The polynomial $\langle D\rangle_{A}(A,M)$ is an
invariant under the extended Reidemeister moves except the Reidemeister moves RI.
\end{thm}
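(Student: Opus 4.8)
The plan is to deduce this directly from Theorem \ref{invofNA} together with the fact, due to Bourgoin \cite{Bourgoin}, that the writhe $\omega(D)$ of an oriented twisted link diagram is a regular invariant, i.e.\ unchanged by every extended Reidemeister move except RI, under which it changes by $\pm 1$. From the defining relation \eqref{Norarrowpoly} we have
\[
  \langle D\rangle_{A}(A,M) = (-A^{3})^{\omega(D)}\,\langle D\rangle_{NA}(A,M),
\]
so $\langle D\rangle_{A}$ is the product of two quantities whose behaviour under the extended Reidemeister moves is already known, and the theorem should fall out as a bookkeeping consequence.

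First I would record that under any virtual Reidemeister move, any twisted Reidemeister move T1, T2, T3, and the moves RII and RIII, no classical crossing is created, destroyed, or has its sign altered (RII introduces a cancelling pair of opposite-sign crossings, RIII only permutes crossings), so $\omega(D)$ is unchanged; by Theorem \ref{invofNA} the normalized polynomial $\langle D\rangle_{NA}(A,M)$ is also unchanged; hence the displayed identity shows $\langle D\rangle_{A}(A,M)$ is unchanged by all of these moves. Then I would treat RI: a positive (resp.\ negative) kink changes $\omega(D)$ by $+1$ (resp.\ $-1$) while leaving $\langle D\rangle_{NA}(A,M)$ fixed, so $\langle D\rangle_{A}(A,M)$ is multiplied by $(-A^{3})^{+1}$ (resp.\ $(-A^{3})^{-1}$) and is therefore \emph{not} invariant under RI. Together these give exactly the asserted statement.

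Alternatively one could argue state-sum-theoretically without invoking Theorem \ref{invofNA}: the RII, RIII and virtual invariance of the state sum \eqref{arrowpoly} is the content of the Dye--Kauffman computation \cite{DKArrow}, essentially unaffected by the presence of bars since bars on the arcs involved can be pushed aside by the detour move of Fig.\ \ref{Fig.detourfortl} and handled by the reduction rules of Fig.\ \ref{Fig.cuspbar}; the T1, T2, T3 invariance is handled by the reduction rules of Figs.\ \ref{Fig.cuspbar}, \ref{Fig.K1bar}, \ref{Fig.T3cuspbar}; and the RI computation is the classical bracket computation yielding the factor $-A^{\pm 3}$. The only point needing a little care on that route is an RI move whose loop carries a bar: one first applies move T1 (Fig.\ \ref{Fig.detourfortl}) to slide the bar off the kink, reducing to the bar-free case. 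Since Theorem \ref{invofNA} is already available, however, the short route above is cleanest, and there is no substantial obstacle here — the result is essentially a corollary of Theorem \ref{invofNA} and the regularity of the writhe.
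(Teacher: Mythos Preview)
Your proposal is correct and matches the paper's approach: the paper does not give a separate argument for this theorem at all, writing only ``And it is obvious that we can get the following conclusion,'' i.e.\ it treats it as an immediate corollary of Theorem~\ref{invofNA} together with the regularity of the writhe, which is exactly the deduction you spell out from \eqref{Norarrowpoly}. Your alternative state-sum route is also in the spirit of how the paper actually establishes Theorem~\ref{invofNA} (checking T3 via Fig.~\ref{Fig.T3cuspbar} and relying on Dye--Kauffman for the classical and virtual moves), though note a small redundancy: the extended Reidemeister moves are local and RI as listed does not carry a bar, so the ``kink with a bar'' case you flag does not arise.
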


By substituting $\frac{t^{\iota}+t^{-\iota}}{2}$ for $K_i$ and $(-A^{2}-A^{-2})^{-1}M$ for $M$, $\langle L\rangle_{NA}(A,M)$ turns into the polynomial $X_{D}(A,M,t)$ (or $Y_{D}(A,M,t)$) defined by Kamada in \cite{Kamada2010}.
By substituting $1$ for $K_i$ and $(-A^{2}-A^{-2})^{-1}M$ for $M$, $\langle L\rangle_{NA}(A,M)$ turns into the polynomial the twisted Jones polynomial defined by Bourgoin in \cite{Bourgoin}.

\begin{exam}
The twisted Jones polynomials of twisted links presented by the latter two diagrams in Fig. 1 are $A^{-6}(A^{2}-A^{4}+1)$. On the other hand, our normalized arrow polynomial invariants of them are $A^{-6}[A^{2}+(-A^{4}+1)K_1]$ and
$A^{-6}(A^{2}-A^{4}+1)$, respectively. (They are also distinguished by the twisted link groups \cite{Bourgoin}, which are the free
product of their upper and lower groups.)
\end{exam}

Note that if a reduced state has the form:
\begin{equation}\label{summand}
  A^{m}d^{l}(K^{j_{1}}_{i_{1}}K^{j_{2}}_{i_{2}}\cdots K^{j_{v}}_{i_{v}})M^{n}.
\end{equation}
Then the \emph{$k$-degree of the state} is:
\begin{equation}\label{kdeg}
 i_{1}\times j_{1} + i_{2}\times j_{2} +\cdots + i_{v}\times j_{v}
\end{equation}

\noindent which is equal to the half of reduced number of cusps in the state associated
with these variables.
A \emph{surviving state} is a summand of
$\langle D\rangle_{A}(A,M)$. The \emph{$k$-degree of a surviving state} is the $k$-degree of any state
associated with this summand.
Notice that if the summand has no $K_{C}$ variables, then
the $k$-degree is zero.

Let $AS(D)$ denote the set of $k$-degrees obtained from the set of surviving
states of a diagram $D$. The surviving states are represented by the summands
of $\langle D\rangle_{A}(A,M)$. That is, if $A^{3}K_{1}K_{4}M^{2}$ is a summand of $\langle D\rangle_{A}(A,M)$ then 5 is an element of $AS(D)$. If a twisted link $L$ has a total of 4 summands with subscripts summing
to: 2, 2, 1, 0 then $AS(L)=\{2, 1, 0\}$.

\begin{Lemma}(\cite{DKArrow}, Lemma 1.3)\label{ASD}
For an oriented virtual link diagram $D$, $AS(D)$ is invariant under the virtual and classical Reidemeister moves.
\end{Lemma}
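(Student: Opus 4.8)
The plan is to deduce this from the already–established invariance of the \emph{normalized} arrow polynomial, the point being that $AS(D)$ is a quantity extracted from $\langle D\rangle_{A}(A,M)$ which is insensitive to multiplication by monomial units in $A$, and hence sees $\langle D\rangle_{A}$ and $\langle D\rangle_{NA}$ alike.

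First I would make precise how $AS(D)$ is read off the polynomial. Writing $\langle D\rangle_{A}(A,M)$ out in the ring $\mathbb{Z}[A^{\pm1},M,K_{1},K_{2},\dots]$, it is a sum of monomials of the form \eqref{summand}, and $AS(D)$ is by definition the set of $k$-degrees \eqref{kdeg} over those monomials whose coefficient is nonzero. Thus $AS(D)$ depends only on \emph{which} monomials in the $K_{i}$ survive, never on their $A$- or $M$-coefficients. Consequently, if $p$ and $q$ lie in this ring with $q=\pm A^{r}p$ for some $r\in\mathbb{Z}$, then $p$ and $q$ give the same set of $k$-degrees: multiplication by the unit $\pm A^{r}$ only shifts $A$-exponents and rescales coefficients by $\pm1$, so it neither merges nor annihilates any $K$-monomial. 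Since $\langle D\rangle_{NA}(A,M)=(-A^{3})^{-\omega(D)}\langle D\rangle_{A}(A,M)$ by \eqref{Norarrowpoly}, it follows that $AS(D)$ can equally well be computed from $\langle D\rangle_{NA}(A,M)$.

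Next I would invoke Theorem \ref{arrowinvar}: for an oriented virtual link, $\langle D\rangle_{NA}(A,M)$ is unchanged, as an element of the ring above, under every classical and virtual Reidemeister move. Therefore the set of surviving $K$-monomials of $\langle D\rangle_{NA}(A,M)$, and a fortiori the set of their $k$-degrees, is unchanged by those moves; by the previous paragraph this set is exactly $AS(D)$, which is thus invariant, proving the lemma.

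The main point — and the only step needing care — is the bookkeeping observation of the second paragraph: $AS$ factors through the class of $\langle D\rangle_{A}(A,M)$ modulo monomial units in $A$, so it inherits invariance from the genuine invariant $\langle D\rangle_{NA}$. The subtlety one must not overlook is cancellation, i.e.\ the possibility that a move secretly merges or kills $K$-monomials; but since $\langle D\rangle_{NA}(A,M)$ is literally preserved by the moves this cannot happen, and passing from the regular-isotopy invariant $\langle D\rangle_{A}$ to $\langle D\rangle_{NA}$ — which absorbs precisely the $-A^{\pm3}$ factor produced under move RI — is exactly what lets RI be handled at no extra cost. A more hands-on route, matching reduced states across each local picture and checking equality of $k$-degrees directly, would only re-prove the invariance of $\langle D\rangle_{A}$ up to RI, and I would avoid it.
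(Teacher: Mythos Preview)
Your argument is correct. Note, however, that the paper does not give its own proof of this lemma: it is simply quoted from \cite{DKArrow} (as Lemma~1.3 there). In \cite{DKArrow} the lemma precedes the invariance result you invoke (Theorem~\ref{arrowinvar} is their Theorem~1.6), so the original argument was presumably a direct check of the reduced state summations under each move rather than a deduction from the invariance of $\langle D\rangle_{NA}$. Your route---observing that $AS(D)$ is insensitive to multiplying $\langle D\rangle_A$ by a monomial unit $\pm A^{r}$, hence is computable from $\langle D\rangle_{NA}$, and then appealing to Theorem~\ref{arrowinvar}---is a legitimate and efficient shortcut given the order in which results are presented in this paper; it trades the case-by-case verification for a single bookkeeping observation.
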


Dye and Kauffman \cite{DKArrow} showed that the phenomenon of cusped states and extra
variables $K_{n}$ only occurs for virtual links.

\begin{thm}(\cite{DKArrow}, Theorem 1.5)
If $D$ be a classical link diagram then $AS(D)=\{0\}$.
\end{thm}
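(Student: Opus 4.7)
The plan is to show that every state $S$ arising in the oriented state expansion of a classical oriented diagram $D$ reduces, via the rules of Fig.~\ref{Fig.reduction}, to a disjoint union of plain unknotted loops, with no residual non-trivial circle graph $K_n$ ($n \geq 1$). Since only the non-trivial circle graphs contribute to $\langle S \rangle$ (and there are no bars in a classical diagram, so no $M$ factor either), cancelling every cusp will show that each surviving summand has the form $A^m d^\ell$ with no $K_n$ factor, giving $k$-degree zero and hence $AS(D)=\{0\}$.

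The crux is a planarity lemma about the cusp structure on a state circle. Because $D$ has no virtual crossings and smoothings preserve planarity, every state $S$ is a disjoint collection of embedded oriented closed curves in the plane, with cusps at the sites of disoriented smoothings. On each such circle the cusps alternate in type ("arrows in" versus "arrows out") as one traverses the curve, because each cusp reverses the local orientation. I claim that any two cyclically adjacent cusps on a planar state circle are never in a zig-zag configuration: their acute-angle interiors lie on the same side of the arc joining them. Indeed, along the arc between two adjacent cusps the orientation is unambiguous, the two cusps are necessarily of opposite types, and the planar embedding forces both acute angles to open onto the same flank of the arc; a zig-zag would require the strand to cross itself between the two cusps, which is impossible for an embedded planar curve. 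Therefore the basic reduction of Fig.~\ref{Fig.reduction} applies and removes the pair.

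With this lemma in hand, the reduction proceeds by induction on the total number of cusps in $S$: each step eliminates a pair of adjacent cusps, and the remaining configuration stays planar with the alternation property preserved. After finitely many steps every circle becomes a plain loop, so no $K_n$ variable survives any surviving state and $AS(D)=\{0\}$. The main obstacle I anticipate is making the planarity lemma precise, i.e., distinguishing the inside of each cusp from its outside and verifying rigorously that the embedded-arc constraint forbids zig-zags; once this geometric verification is in place, the inductive clean-up and the conclusion are routine.
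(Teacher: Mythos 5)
Your argument hinges on a claimed ``planarity lemma'': that on an embedded planar state circle, two cyclically adjacent cusps can never form a zig-zag, because ``a zig-zag would require the strand to cross itself.'' This is false, and the reduced circle graph $K_1$ itself is the counterexample. As the paper describes, $K_1$ is drawn as an \emph{embedded} circle in the plane with two cusps of opposite types, one pointing inward (obtuse angle inside) and one pointing outward (acute angle inside); traversing one of the two arcs between them, the acute-angle interiors are on opposite flanks, which is precisely the zig-zag configuration, and yet the curve is a simple closed curve with no self-crossing. So planarity of a state circle, together with alternation of cusp types, does not by itself force the reduction rules to cancel all cusps. Consequently the inductive clean-up step has no base to stand on.

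The gap is not cosmetic. What distinguishes a classical diagram from a virtual one at the level of states is not that individual state circles are embedded (that is true but insufficient), but that the cusps created by disoriented smoothings come in pairs attached to crossings, and in the absence of virtual crossings the connecting arcs force these pairs into a nested (balanced-parenthesis) order along each state circle; cancellation then proceeds from innermost pairs outward. Your proof never invokes how the two cusps of a crossing sit relative to one another in the plane, nor how the rest of the diagram constrains the cyclic order of cusps on a given circle, so it cannot rule out the $K_1$-type configuration that does occur for genuinely virtual diagrams. To repair the argument you would need a global, nesting-type statement about cusp pairs coming from a planar 4-valent shadow, not a local statement about two consecutive cusps on one embedded circle.
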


\begin{thm}
For an oriented twisted link diagram $D$,
$AS(D)$ is invariant under the extended Reidemeister moves.
\end{thm}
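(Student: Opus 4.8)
The plan is to deduce this directly from the invariance of the (normalized) arrow polynomial established above, using the fact that $AS(D)$ depends only on the $K$-variable content of that polynomial and is therefore insensitive to the normalization factor. This is the twisted analogue of Lemma~\ref{ASD}.

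First I would recall that for an oriented twisted link diagram $D$ the polynomial $\langle D\rangle_A(A,M)$ is defined through the oriented state expansion together with the bar-reduction rules of Remark~\ref{importantstatement} (Figures~\ref{Fig.cuspbar} and~\ref{Fig.K1bar}), and that each of its monomials has the form $A^{m}d^{\,l}(K_{i_1}^{j_1}\cdots K_{i_v}^{j_v})M^{n}$ as in~\eqref{summand}. The $k$-degree $\sum_t i_t j_t$ of such a monomial, and hence the set $AS(D)$, is determined solely by the subscripts of the $K$-variables occurring in it, and is unaffected by the powers of $A$, of $d$, or of $M$. In particular, since $\langle D\rangle_{NA}(A,M)=(-A^{3})^{-\omega(D)}\langle D\rangle_A(A,M)$ and multiplication by $(-A^{3})^{-\omega(D)}$ merely rescales the $A$-power of every monomial without touching its $K$-part, the set of $k$-degrees read off from $\langle D\rangle_{NA}(A,M)$ coincides with $AS(D)$ as defined from $\langle D\rangle_A(A,M)$.

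Next I would invoke Theorem~\ref{invofNA}: $\langle L\rangle_{NA}(A,M)$ is invariant under the extended Reidemeister moves. Hence, if $D$ and $D'$ are oriented twisted link diagrams related by a sequence of extended Reidemeister moves, then $\langle D\rangle_{NA}(A,M)=\langle D'\rangle_{NA}(A,M)$ as polynomials, so they have the same collection of monomials and therefore, by the previous paragraph, the same set of $k$-degrees; thus $AS(D)=AS(D')$. Alternatively one can argue from the unnormalized polynomial: by the theorem immediately preceding this one, $\langle D\rangle_A$ is invariant under all extended moves except RI, and under a move RI it is simply multiplied by $(-A^{3})^{\pm1}$, which again leaves the $K$-content of each monomial untouched, so $AS(D)$ is RI-invariant as well.

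The argument is essentially bookkeeping, and I expect the only point requiring genuine care --- the main, if mild, obstacle --- to be confirming that ``the $k$-degree of a surviving state'' is well defined for twisted diagrams, i.e.\ that the bar-reduction procedure of Remark~\ref{importantstatement} always produces a definite monomial in $K_1,K_2,\dots$, independent of the order in which pairs of bars are cancelled, so that $AS(D)$ is meaningful to begin with. Once that well-definedness is granted (it is already implicit in the definition of $\langle D\rangle_A$ for twisted diagrams and in Theorem~\ref{invofNA}), no separate verification for the moves RII, RIII, the virtual moves, or T1, T2, T3 is needed: the invariance of $AS(D)$ follows formally from the invariance of the polynomial.
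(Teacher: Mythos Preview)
Your approach is correct and is essentially the same as the paper's: the paper's entire proof is the one-line observation ``We conclude it by Theorem~\ref{invofNA},'' and your argument simply unpacks why invariance of $\langle D\rangle_{NA}$ forces invariance of the set of $k$-degrees. Your extra remarks on the RI normalization and on the well-definedness of the bar-reduction are reasonable elaborations but do not change the underlying strategy.
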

\begin{proof}
We conclude it by Theorem \ref{invofNA}.
\end{proof}

Next we shall determine the characteristics of arrow polynomials of checkerboard colorable twisted link diagrams. For this, we need to use the concept of twisted braid introduced by the author and the co-author in \cite{Xue}.
Let twisted braid $\mathcal{TB}_{n}$ be a $n$-strands braid generated by $\sigma_i$, $\upsilon_i$ and $b_i$ as shown in Fig.\ref{Fig.braidgenerators}.
For any $i,j=1,2,...,n-1$, the generator elements satisfy the following relations:
\begin{equation}\label{vrelation}
\begin{split}
  &\sigma_{i}\sigma_{i}^{-1}=1_{n} \\
&\sigma_{i}\sigma_{i+1}\sigma_{i}^{-1}=\sigma_{i+1}^{-1}\sigma_{i}\sigma_{i+1}\\
&\sigma_{i}\sigma_{j}=\sigma_{j}\sigma_{i} \ \ if  \ |i-j|\geq 2\\
       &b_ib_{i}=1_{n}\\
    &b_ib_{i+1}=b_{i+1}b_i\\
    &b_iv_{i}=v_{i}b_{i+1}\\
    &b_ib_{i+1}\sigma_{i}b_ib_{i+1}=v_{i}\sigma_{i}v_i\\
       &v_i^2=1_n\\
        &v_iv_j=v_jv_i \ \ if  \ |i-j|\geq 2\\
        &v_iv_{i+1}v_i=v_{i+1}v_iv_{i+1} \\
    &\sigma_{i}v_j=v_j\sigma_{i} \ \ if \ |i-j|\geq 2\\
    &v_i\sigma_{i+1}v_i=v_{i+1}\sigma_iv_{i+1} \\
  \end{split}
\end{equation}
\begin{figure}[!htbp]
  \centering
  \includegraphics[width=14cm]{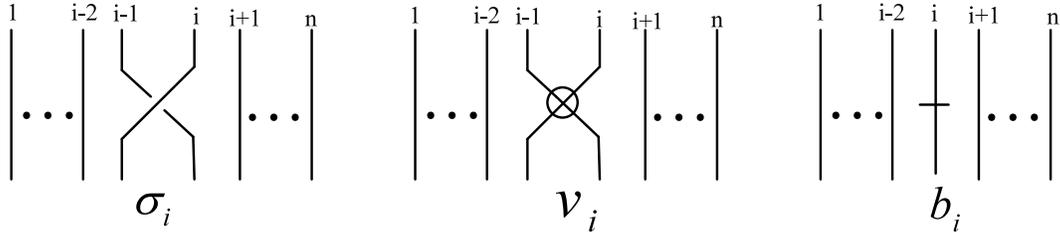}
    \renewcommand{\figurename}{Fig.}
  \caption{Generators $\sigma_i$, $\upsilon_i$ and $b_i$ of twisted braid $\mathcal{TB}_{n}$.}
  \label{Fig.braidgenerators}
\end{figure}

Kauffman and Lambropoulou \cite{KauLam} gave a general method for converting virtual links to
virtual braids. The braiding method given there is quite general and applies to
all the categories in which braiding can be accomplished. It includes the braiding of
classical, virtual, flat, welded, unrestricted, and singular knots and links.
For a twisted link diagram, we just explain the braiding techniques for a classical crossing, a virtual crossing and a bar by three schemas as shown in Fig. \ref{Fig.fulltwist}, \ref{Fig.halftwist} and \ref{Fig.movebar}, we refer the reader to \cite{KauLam,Xue}.

We give a special example for the braiding process of a twisted link as shown in Fig. \ref{Fig.example}.
We refer the reader to \cite{KauLam} and \cite{Xue} for explanation.
\begin{figure}[!htbp]
  \centering
  \includegraphics[width=14cm]{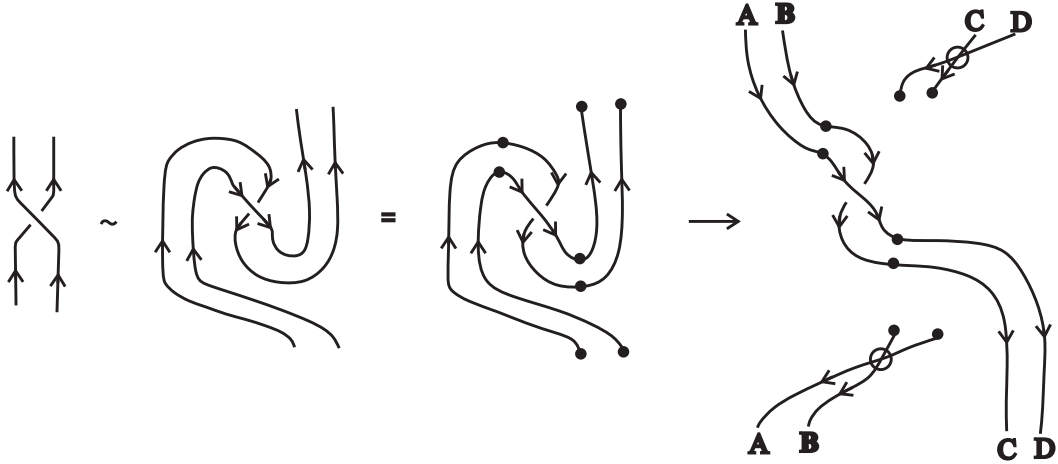}
    \renewcommand{\figurename}{Fig.}
  \caption{Full twist for a classical crossing.}
  \label{Fig.fulltwist}
\end{figure}

\begin{figure}[!htbp]
  \centering
  \includegraphics[width=14cm]{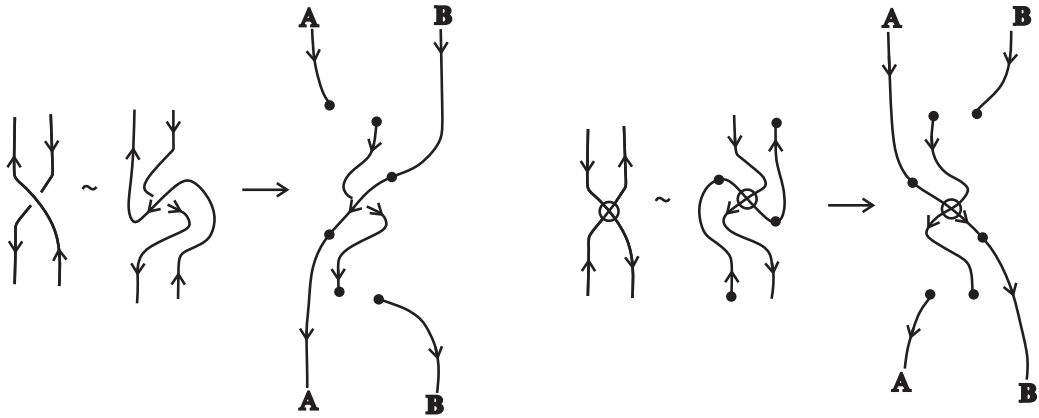}
    \renewcommand{\figurename}{Fig.}
  \caption{Half twist for a virtual crossing.}
  \label{Fig.halftwist}
\end{figure}

\begin{figure}[!htbp]
  \centering
  \includegraphics[width=14cm]{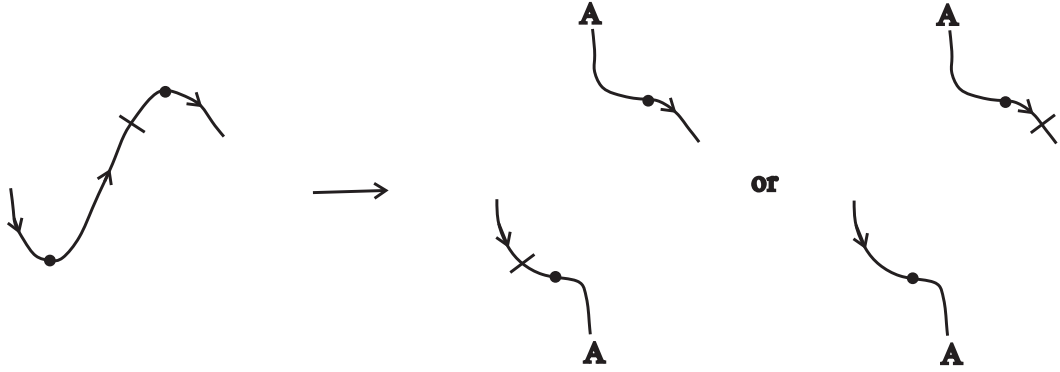}
    \renewcommand{\figurename}{Fig.}
  \caption{The braiding techniques for a bar.}
  \label{Fig.movebar}
\end{figure}

\vspace{5cm}
\begin{figure}[!htbp]
  \centering
  \includegraphics[width=14cm]{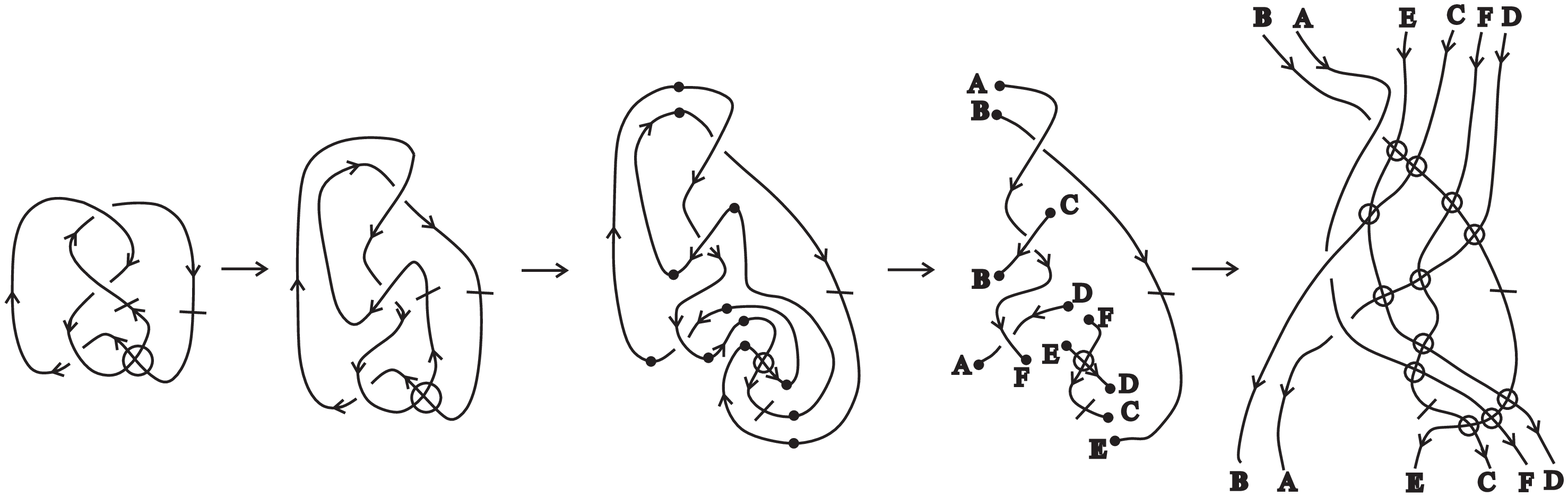}
    \renewcommand{\figurename}{Fig.}
  \caption{Twisted braid example.}
  \label{Fig.example}
\end{figure}

In Fig. \ref{Fig.example} we illustrate an example of braiding the arcs of the diagram.
In the first and second pictures we show a
twisted knot and its preparation for braiding by crossing rotation, respectively.
In the forth picture, we break each up-arc, and we name each pair of endpoints with the same letter.
Note that the closure of the twisted braid in Fig. \ref{Fig.example} is checkerboard colorable.

\begin{thm}(\cite{KauLam}, Theorem 1)\label{braiding}
Every (oriented) virtual link can be represented by a virtual
braid whose closure is isotopic to the original link.
\end{thm}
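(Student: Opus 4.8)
The plan is to follow the strategy of Kauffman and Lambropoulou, constructing the braid by an explicit sequence of local modifications of the diagram. First I would fix a braid axis: regard the plane as punctured at a point $\ast$ and declare a ``braiding direction'' so that a virtual braid is a tangle all of whose strands are monotone with respect to the angular coordinate around $\ast$, its closure being obtained by joining the top endpoints to the bottom endpoints around $\ast$. Given an oriented virtual link diagram $D$, I would isotope $D$ into general position with respect to the radial height function, so that $D$ becomes a union of finitely many \emph{up-arcs} and \emph{down-arcs} (maximal subarcs coherent, resp.\ incoherent, with the braiding direction) meeting at finitely many classical and virtual crossings and at finitely many local extrema.

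Next I would normalise the crossings by \emph{crossing rotation}: at each classical or virtual crossing whose two strands are not both coherent with the braiding direction, apply an ambient isotopy supported near the crossing that turns it into the standard braided picture of a generator $\sigma_i^{\pm1}$ or a virtual generator, pushing the ``wrong-way'' portions of the strands out of the crossing box into the adjacent arcs. After this every crossing is braided, and all the failure of $D$ to be a braid is concentrated in the up-arcs. Then comes the core \emph{braiding move}: pick an up-arc $\gamma$ disjoint from the crossings and from the local extrema, cut $\gamma$ at an interior point into two free ends, and carry each free end around the braid axis $\ast$ to re-emerge coherently with the braiding direction, routing it past every strand it must cross using \emph{virtual} crossings only. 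This is legitimate because the detour move (Fig.~\ref{dm}) lets us push an arc bearing only virtual crossings across any region of the diagram without changing the virtual link. The up-arc $\gamma$ is thereby replaced by two descending strands plus some new virtual crossings; crucially, no new up-arc is created. Iterating, after finitely many braiding moves $D$ is transformed, by moves preserving the underlying virtual link, into a tangle all of whose strands are coherent with the braiding direction, i.e.\ the closure of a word in the generators of $\mathcal{TB}_n$ (with the bar generators $b_i$ absent in the purely virtual case).

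The step I expect to be the main obstacle is verifying that the braiding move can always be arranged so that it does not introduce new up-arcs and strictly decreases the complexity (for instance the number of up-arcs, or the number of up-arcs counted with the crossings they bound), so that the procedure terminates after finitely many steps. This requires a careful analysis of how the re-routed strand interacts with the existing crossings and with the up/down decomposition, and it is precisely the point at which Kauffman and Lambropoulou introduce their ``$L$-move'' formalism and the accompanying bookkeeping. Once termination is established, reading the resulting braided tangle from top to bottom yields the required virtual braid, and its closure is virtually isotopic to $D$; since $D$ was an arbitrary diagram of the given virtual link, the conclusion follows.
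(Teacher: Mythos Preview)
Your proposal is essentially correct and follows the Kauffman--Lambropoulou strategy that the paper cites. Note, however, that the present paper does not give its own proof of this theorem: it states the result as \cite[Theorem~1]{KauLam}, illustrates the braiding technique via the figures for crossing rotation (Fig.~\ref{Fig.fulltwist}, Fig.~\ref{Fig.halftwist}) and the worked example (Fig.~\ref{Fig.example}), and explicitly refers the reader to \cite{KauLam} for details; the only argument written out is the one-line reduction of the twisted case (Theorem~\ref{twistedbraid}) to the virtual case. So your sketch is in fact more detailed than what appears here, and it matches the cited source: crossing rotation to put each crossing in braided form, then elimination of up-arcs by cutting and re-routing the free ends through purely virtual crossings, justified by the detour move. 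Your identification of termination as the delicate point is accurate; in \cite{KauLam} this is handled by the $L$-move bookkeeping you mention, and no additional idea is needed beyond what you outline.
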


\begin{thm}(\cite{Xue})\label{twistedbraid}
Every (oriented) twisted link can be represented by a twisted
braid whose closure is isotopic to the original link.
\end{thm}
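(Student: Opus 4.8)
The plan is to extend the braiding algorithm of Kauffman and Lambropoulou (Theorem \ref{braiding}) so that it also transports the bars of a twisted link diagram into braid position. Fix an oriented twisted link diagram $D$. First I would put $D$ in \emph{braiding position} with respect to a chosen height function and basepoint: after a small planar isotopy, $D$ becomes a union of monotone ``up-arcs'', finitely many local maxima (caps) and minima (cups), classical crossings, virtual crossings, and bars, with no two of these special features at the same height, and with every bar lying in the interior of a monotone up-segment of an arc. A bar that initially sits on a cap, on a cup, or next to a crossing can always be slid a short way along its strand into such a position by planar isotopy together with move T1 (Fig. \ref{F:T123}).

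Next I would run the Kauffman--Lambropoulou ``braiding move'' exactly as in the virtual case, using the local schemas of Fig. \ref{Fig.fulltwist} for a classical crossing and Fig. \ref{Fig.halftwist} for a virtual crossing: each cap, cup, or badly oriented crossing is eliminated by cutting the offending arc and rerouting one of the resulting pieces upward through a string of newly created virtual crossings (a detour). Throughout, a bar is treated as a marked point carried along with the arc on which it sits; since the rerouted pieces are again monotone up-segments, Fig. \ref{Fig.movebar} shows how such a bar is pushed past the new virtual crossings (move T1) until it again lies on a clean up-segment. The termination argument of \cite{KauLam} is unaffected, because the complexity measure that strictly decreases under each braiding move counts only caps, cups, and badly oriented crossings, to which bars contribute nothing; the worked example of Fig. \ref{Fig.example} illustrates the full procedure.

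When the algorithm halts, $D$ has been transformed, through a sequence of isotopies, virtual Reidemeister moves, detour moves with a bar (Fig. \ref{Fig.detourfortl}), and twisted Reidemeister moves, into the closure of a diagram all of whose arcs are monotone and all of whose bars lie on monotone up-segments. After using move T1 to separate the features so that at most one special feature occurs at each height, I would read this diagram from bottom to top, recording $\sigma_i^{\pm 1}$ at each classical crossing, $\upsilon_i$ at each virtual crossing, and $b_i$ at each bar; this expresses $D$ as the closure of a word in the generators $\sigma_i$, $\upsilon_i$, $b_i$ of $\mathcal{TB}_n$, i.e.\ as a twisted braid. Every move used is an extended Reidemeister move, so the closure of this twisted braid is isotopic to $L$, which is the assertion.

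The step I expect to be the main obstacle is the bookkeeping for the bars during the rerouting: one must verify that whenever the braiding move cuts an arc carrying a bar, the bar can be legitimately slid onto one of the new up-segments --- using only move T1 (equivalently the relation $b_i\upsilon_i=\upsilon_ib_{i+1}$ among those listed in \eqref{vrelation}) together with the detour-with-bar move --- without changing the twisted link type, and, in the subcase of Fig. \ref{Fig.movebar} in which a rerouted strand is dragged past a classical crossing, that the resulting local picture is exactly move T3 (the relation $b_ib_{i+1}\sigma_i b_ib_{i+1}=\upsilon_i\sigma_i\upsilon_i$). Once these local verifications are carried out, the remainder is the verbatim translation of the Kauffman--Lambropoulou proof.
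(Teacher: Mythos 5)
Your proposal is correct and takes the same approach the paper has in mind: the paper's own ``proof'' is a one-sentence deferral to the Kauffman--Lambropoulou braiding algorithm (``we will adapt the similar technique \ldots\ we leave the proof to the reader''), while you have actually supplied the adaptation — putting bars on monotone up-arcs, carrying them along the rerouted strands via move T1 (the relation $b_i\upsilon_i=\upsilon_ib_{i+1}$), noting that the termination measure is unaffected, and reading off a word in $\sigma_i,\upsilon_i,b_i$. Your flagged obstacle (verifying the bar bookkeeping in Fig.~\ref{Fig.movebar} against T1 and T3) is exactly the content that the paper leaves to the reader, so the proposal is, if anything, more complete than the paper's proof.
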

\begin{proof}
Based on the proof of Theorem 1 in \cite{KauLam}, we will adapt the similar technique for the case of twisted link.
So we leave the proof to the reader.
\end{proof}

\begin{remark}
According the braiding technique, described in Theorem 1 \cite{KauLam}, which just changes the relatively position of classical and virtual crossings (resp. bars) by crossing rotation (resp. by moving), thus the original twisted link diagram and the closure of its twisted braid have the same checkerboard colorability.
\end{remark}

We now determine the characteristics of arrow polynomials of checkerboard colorable twisted link diagrams.
We have the following statement that figures out the characteristics of cusped states and extra
variable $M$ for the arrow polynomials of checkerboard colorable twisted links.

\begin{thm}\label{main}
Let $D$ be an oriented checkerboard colorable twisted link diagram.
Then $\langle D\rangle_{NA}(A,M)$ has
\begin{enumerate}
  \item[(1)] $\langle D\rangle_{NA}(A,M)$ does not contain $M$;
  \item[(2)] $AS(D)$ only contains even integer;
  \item[(3)] If $\langle D\rangle_{NA}(A,M)$ contains a summand with $K_{i_{1}}^{j_{1}}K_{i_{2}}^{j_{2}}\cdots K_{i_{v}}^{j_{v}}$ ($i_1>i_2>\cdots i_v\geq1$), then $\sum_{t=1}^{v}i_t\cdot j_t$ is even and $i_1\leq \sum_{t=1}^{v}i_t\cdot j_t-i_1$.
\end{enumerate}
\end{thm}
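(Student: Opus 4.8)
The plan is to reduce everything to the checkerboard‑colorable \emph{braid} picture via Theorem~\ref{twistedbraid}, and then run the state‑sum argument on a diagram that carries an alternate orientation (= checkerboard coloring) on its underlying $4$‑valent graph. By the Remark following Theorem~\ref{twistedbraid} the closure of the braid is checkerboard colorable exactly when $D$ is, so without loss of generality I may assume $D$ itself is checkerboard colorable and comes equipped with an alternate orientation of $|D|$ in the sense of Fig.~\ref{Fig.alterorietation}; equivalently, via the bijection discussed at the end of Section~\ref{proveconj}, $D$ arises from a virtual diagram with a checkerboard framing by replacing cut points with bars, so every bar sits between two edges carrying \emph{opposite} framing colors while a cut point would not be needed there. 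The key bookkeeping device is to track, for each state $S$ in the expansion~\eqref{arrowpoly}, the induced coloring on the state circles: at a classical crossing the oriented smoothing respects the alternate orientation and the disoriented smoothing creates a cusp \emph{pair} whose two cusps sit on arcs of the \emph{same} color; at a virtual crossing the coloring passes through; at a bar the color switches. So along any reduced state circle, reading the sequence of colors, a color change happens precisely at a bar or at a cusp, and consecutive cusps are separated by an even or odd number of bars according to whether they bound a same‑colored or opposite‑colored arc.

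For part (1): I claim each state circle meets an \emph{even} number of bars. Indeed, walking once around a state circle the color must return to its starting value, and each bar toggles it while crossings and virtual crossings do not — hence the number of bars on each component is even. By the reduction rule of Fig.~\ref{Fig.cuspbar} (absorbing bars in pairs) together with Fig.~\ref{Fig.K1bar}, a circle graph with an even number of bars reduces to some $K_n$ with \emph{no} surviving bar, never to a trivial loop with a bar; since $M^{o(S)}$ records only trivial loops with a bar, every surviving state has $o(S)=0$, so $M$ does not appear. Normalizing by $(-A^3)^{-\omega(D)}$ does not introduce $M$, giving~(1).

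For parts (2) and (3): after the bar‑absorption of Remark~\ref{importantstatement} each state circle becomes an honest arrow circle graph, reducing to some $K_n$, and the $k$‑degree of the summand is $\sum_t i_t j_t = \tfrac12(\text{total reduced cusps})$. I must show this total is even, and that the largest index $i_1$ satisfies $i_1 \le \sum_t i_t j_t - i_1$, i.e. $2i_1 \le \sum_t i_t j_t$, i.e. no single $K_n$ carries more than half of all the cusps. For the parity: fix one reduced circle graph $K_n$ and two‑color the plane it sits in so the colors alternate across the arcs; the $2n$ alternating cusps come in inward/outward pairs, and each cusp‑creating disoriented smoothing came from a classical crossing, contributing a cusp pair whose members lie on same‑colored arcs. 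Counting color changes around the original component as ``one per bar, one per cusp'' and using that the bar count on each component is even (part (1)) forces the cusp count on each component to be even as well; summing over all circle graphs in the state and halving gives that $\sum_t i_t j_t$ is even, proving~(2). For the inequality in~(3): the reduced graph $K_{i_1}$ with the most cusps sits inside the state as an embedded circle with $2i_1$ alternating cusps, and the complementary state circles (each some $K_{i_t}$ or a trivial loop) together contribute $\sum_{t\ge 2} 2 i_t j_t + 2(i_1)(j_1-1)$ further cusps; the point is that a surviving state with a nonzero cusped circle graph cannot consist of that circle graph \emph{alone together with only trivial loops} when $D$ is checkerboard colorable — the disoriented smoothings that produced the $2i_1$ cusps of $K_{i_1}$ are matched, via the alternate orientation, by an equal or greater number of cusps distributed on the remaining circles. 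Spelling this matching out is the heart of the argument.

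\medskip

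The hard part will be step~(3), specifically making precise the ``matching'' that forces $2i_1 \le \sum_t i_t j_t$: unlike the parity statements, which are a clean toggling count, the inequality requires controlling how cusps created at classical crossings get \emph{distributed} among the several state circles and showing that the checkerboard coloring prevents all of them (beyond half) from accumulating on one circle. I expect the right way to nail this is to argue on the abstract/ribbon surface underlying the checkerboard‑colorable diagram — the alternate orientation gives a consistent co‑orientation of the arcs, each disoriented smoothing reverses co‑orientation and so must be ``paid back'' elsewhere along the same surface boundary component — and to push the same pairing through the bar‑absorption moves of Fig.~\ref{Fig.cuspbar}, which preserve the multiset of cusp types. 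Parts~(1) and~(2), by contrast, should be short once the color‑toggling dictionary (bar: toggle; crossing/virtual crossing: no toggle; cusp: toggle) is set up.
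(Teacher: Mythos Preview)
Your treatment of part~(1) is correct and essentially the paper's argument: checkerboard colorability forces each state circle to carry an even number of bars, so by the bar--absorption rules of Remark~\ref{importantstatement} no circle reduces to a trivial loop with a bar and hence $o(S)=0$ for every state.

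Your approach to~(2), however, has a genuine gap. Grant your toggling dictionary (color flips at bars and at cusps, passes through at virtual crossings). Walking once around a state circle forces the total number of toggles, i.e.\ $(\#\text{bars})+(\#\text{cusps})$, to be even; combined with part~(1) this yields that the number of cusps on each circle is even. But that statement is \emph{vacuous}: every reduced circle graph $K_n$ has exactly $2n$ cusps, which is already even. What~(2) asks is that the \emph{halved} total $\sum_t i_t j_t$ be even, i.e.\ that the total cusp count be divisible by~$4$, and your parity argument does not touch this. The same circularity infects the set--up of~(3), where your ``matching'' of cusps across circles is only sketched; as you acknowledge, the inequality $2i_1\le\sum_t i_t j_t$ is the heart of the matter and is not established.

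The paper sidesteps both difficulties by \emph{not} reproving~(2) and~(3) from scratch. Instead it encodes the interaction between cusps and the checkerboard coloring by a word in symbols $i^{<},i^{>}$ (recording whether the acute angle of each cusp lies on the colored side or not), observes that the bar--absorption rule of Fig.~\ref{Fig.cuspbar} preserves this word, and thereby reduces the twisted case to the already--proved virtual case (Theorem~4.3 of \cite{DJK2020}). So the new content is precisely that bars can be eliminated without disturbing the cusp/color word; the arithmetic constraints~(2) and~(3) are then inherited. If you want to salvage a direct argument, the right invariant to track is not merely the parity of cusps per circle but this finer $i^{<}/i^{>}$ labelling, from which both the mod~$4$ congruence and the inequality follow by the combinatorics in \cite{DJK2020}.
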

\begin{proof}
Since $D$ is checkerboard colorable, then each circle graph with bars or not is checkerboard colorable.
Thus for each circle graph with bars will contain even bars.
According to the above statement in Remark \ref{importantstatement}, this kind of circle graph will reduce to $K_{1},K_{2},...$.
Thus Theorem \ref{main}(1) holds.

Next we shall explain that the Theorem \ref{main}(2) and \ref{main}(3) for the case of twisted link diagram can be reduced to the case of virtual link diagram in \cite{DJK2020} (Theorem 4.3(1) and (2)).
The key is to reduce all bars and have the same characteristics.

By Theorem \ref{twistedbraid}, we can arrange $D$ as twisted braid $\mathcal{B}$ with all strands oriented downwards whose closure $\mathcal{\overline{B}}$ is isotopic to $D$ and checkerboard colorable.
Note that $\mathcal{\overline{B}}$ and $D$ have the same set of classical crossings (see Fig. \ref{Fig.example}).
Assume that $C$ is one checkerboard coloring of $\mathcal{\overline{B}}$, in where we only draw one kind of color (yellow) to describe the coloring $C$.

Let $\sigma$ be an unreduced state of $\mathcal{\overline{B}}$ without applying reduction rule in Fig. \ref{Fig.reduction}.
According to the coloring $C$, we can obtain a checkerboard coloring $C^{\sigma}$ of state $\sigma$, in where, the colorings of a small neighbourhood of one side of each arcs are the same with the coloring $C$ of $\mathcal{\overline{B}}$ except small segments of arcs around some classical crossings (see Fig. \ref{Fig.cbs}).

\begin{figure}[!htbp]
  \centering
  \includegraphics[width=14cm]{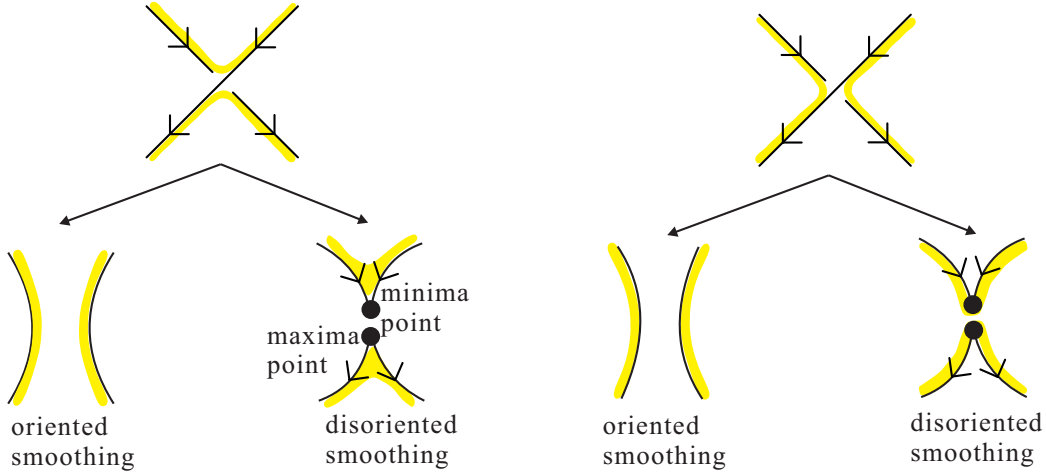}
    \renewcommand{\figurename}{Fig.}
  \caption{Checkerboard coloring $C^{\sigma}$ of state $\sigma$ (the crossing can be negative).}
  \label{Fig.cbs}
\end{figure}

On the one hand, we assume that the cusp $i$ will be read as $i^{<}$ (or $i^{>}$) if the acute (or obtuse) angle of cusp $i$ and color (yellow) are on the same side in a circle graph.
In all unreduced circle graphs of $\sigma$, there will be two $i^{<}$s (or $i^{>}$s) since the pair of acute angles of cusp $i$ and color (yellow) are on the same (or different) side.
A circle graph will have a \emph{corresponding word} if we read the cusp labeling $i$ $i^{<}$ (or $i^{>}$) of each classical crossing along a direction.
Note that we do not read out the labeling $b$ of bars, that is, there is not $b$ in the word since the information of bars is transferred to the word.
Note that the word is consistent with the virtual link diagram case in \cite{DJK2020} (see the proof of Theorem 4.3).

On the other hand, if we first reduce all bars in each circle graph $G$, then we will obtain a new circle graph $G'$ such that the placement relation between acute angle of every cusp and color does not change as shown in Fig. \ref{Fig.cuspbar}.
Thus the words of $G$ and $G'$ should be the same.

Hence, the remain proof is consistent with the virtual link diagram case in \cite{DJK2020} (see the proof of Theorem 4.3(1) and (2)).
\end{proof}

We can get the the same statement as Theorem 4 in \cite{Bourgoin}.
\begin{cor}
If a twisted link has a checkerboard colorable diagram, then its twisted Jones
polynomial is $(-A^{2}-A^{-2})$ times its Jones polynomial.
\end{cor}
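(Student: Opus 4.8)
The plan is to obtain this as a direct consequence of Theorem \ref{main}(1), together with the explicit dictionary recorded just after \eqref{Norarrowpoly}, which relates the normalized arrow polynomial $\langle L\rangle_{NA}(A,M)$, Bourgoin's twisted Jones polynomial, and the ordinary Jones polynomial. All of the substantive work has already been done in Theorem \ref{main}; the corollary is essentially the translation of its first assertion into the normalization conventions of \cite{Bourgoin}.

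In detail, let $D$ be a checkerboard colorable diagram representing the twisted link $L$. By Theorem \ref{main}(1) the variable $M$ does not occur in $\langle D\rangle_{NA}(A,M)$; in the language of the state sum \eqref{arrowpoly} this says that in every surviving state each circle graph carries an even number of bars and hence reduces, via the rule in Remark \ref{importantstatement}, either to a plain trivial loop or to some $K_n$ with $n\ge 1$, but never to a trivial loop carrying a bar, so no positive power of $M$ is ever produced. Next I would pass to Bourgoin's twisted Jones polynomial: by the substitution described after \eqref{Norarrowpoly} it is obtained from $\langle L\rangle_{NA}(A,M)$ by setting every $K_i$ equal to $1$ and replacing $M$ by $(-A^{2}-A^{-2})^{-1}M$. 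For our checkerboard colorable $D$ the replacement of $M$ is vacuous, while setting all the $K_n$ equal to $1$ collapses the arrow state sum to the Kauffman bracket state sum (this is the observation, made after the definition of $\langle D\rangle_A$, that the arrow polynomial reduces to the classical bracket polynomial when every $K_n$ is set to unity). Thus, up to an overall normalization, the twisted Jones polynomial of $L$ is nothing but the normalized Kauffman bracket of $D$ in the variable $A$, that is, the Jones polynomial of $L$.

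What remains is to locate the factor $-A^{2}-A^{-2}$, and this is purely a matter of matching conventions: in \cite{Bourgoin} each state circle is weighted by $d=-A^{2}-A^{-2}$, including the last one, which in \eqref{arrowpoly} is normalized so as to contribute $1$, so Bourgoin's twisted Jones polynomial carries exactly one extra global factor of $d$ relative to the $K_i=1$ specialization of $\langle L\rangle_{NA}$. Carrying this factor through the identification above yields precisely the asserted identity that the twisted Jones polynomial of $L$ equals $(-A^{2}-A^{-2})$ times the Jones polynomial of $L$, which is Theorem 4 of \cite{Bourgoin}. I do not anticipate any genuine obstacle beyond Theorem \ref{main}(1): the one point that truly matters is that the comparison be carried out on a checkerboard colorable diagram, since that hypothesis — and only that hypothesis — forces the absence of $M$, equivalently the absence of surviving twisted loops, in the state sum.
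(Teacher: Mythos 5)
Your proposal is correct and follows essentially the same route as the paper: invoke Theorem \ref{main}(1) to eliminate $M$, specialize $K_i=1$ to collapse $\langle D\rangle_{NA}$ to the normalized Kauffman bracket (Jones polynomial), and identify the leftover global factor $-A^2-A^{-2}$ as the normalization mismatch between Bourgoin's twisted Jones polynomial and the bracket that is normalized to $1$ on the unknot. Your extra remarks merely make explicit the $d$-factor bookkeeping that the paper's proof writes directly as $V_D(A,M)=(-A^{2}-A^{-2})\langle D\rangle_{NA}(A,(-A^{2}-A^{-2})^{-1}M)$.
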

\begin{proof}
Let $D$ be a checkerboard colorable diagram of checkerboard colorable twisted link $L$.
Since $\langle D\rangle_{NA}(A,M)$ does not contain $M$, and we let $K_i=1$ in $\langle D\rangle_{NA}(A,M)$, then its twisted Jones
polynomial $V_D(A,M)=(-A^{2}-A^{-2})\langle D\rangle_{NA}(A,(-A^{2}-A^{-2})^{-1}M)$ is just $(-A^{2}-A^{-2})$ times its Jones polynomial.
\end{proof}

Note that bar only deduce the cusps in circle graph.

\begin{Lemma}
Let $L$ be an oriented twisted link.
Then $b(L)$ is no less than the maximum degree of $M$ in $\langle L\rangle_{NA}(A,M)$.
\end{Lemma}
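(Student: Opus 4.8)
The plan is to prove the inequality diagram by diagram: bound the $M$-degree of the unnormalized arrow polynomial of an arbitrary representative of $L$ by its number of bars, then pass to the normalized polynomial and invoke invariance. Recall from the definition \eqref{arrowpoly} that the variable $M$ enters $\langle D\rangle_{A}(A,M)$ only through the factor $M^{o(S)}$ attached to each oriented bracket state $S$, where $o(S)$ is the number of circle graphs of $S$ that reduce to a trivial loop carrying one bar; by Remark \ref{importantstatement} and Figs.\ \ref{Fig.cuspbar}--\ref{Fig.K1bar}, these are precisely the circle graphs of $S$ containing an \emph{odd} number of bars, the reduction procedure recording only the parity of the bar count within each circle graph.

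First I would fix a twisted link diagram $D$ of $L$ with exactly $b(D)$ bars and observe that the oriented state expansion of Fig.\ \ref{Fig.arrow} never creates or destroys a bar: bars sit on arcs of $D$, and smoothing a classical crossing (or passing a face through a virtual crossing) is a local operation away from the bars. Hence every state $S$ of $D$ carries exactly $b(D)$ bars in total, distributed among its pairwise disjoint circle graphs. Each circle graph counted by $o(S)$ contains at least one bar, so $o(S)\le b(D)$. Consequently every monomial $A^{\alpha-\beta}d^{|S|-1}\langle S\rangle$ occurring in \eqref{arrowpoly} has $M$-degree at most $b(D)$, and cancellation among states can only lower the $M$-degree; therefore the maximum degree of $M$ in $\langle D\rangle_{A}(A,M)$ is at most $b(D)$.

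Next I would pass to the normalized polynomial. The factor $(-A^{3})^{-\omega(D)}$ in \eqref{Norarrowpoly} is a unit in $\mathbb{Z}[A,A^{-1}]$ not involving $M$, so multiplying by it merely shifts $A$-exponents and leaves all $M$-degrees unchanged; hence the maximum degree of $M$ in $\langle D\rangle_{NA}(A,M)$ is also at most $b(D)$. By Theorem \ref{invofNA}, $\langle L\rangle_{NA}(A,M)=\langle D\rangle_{NA}(A,M)$ does not depend on the chosen diagram $D\sim L$, so its maximum $M$-degree is $\le b(D)$ for every diagram $D$ representing $L$. Taking the minimum over all such $D$ and using \eqref{barnumber} yields that the maximum degree of $M$ in $\langle L\rangle_{NA}(A,M)$ is at most $b(L)$, which is exactly the assertion.

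All the substantive content is the bookkeeping in the second paragraph; the one point requiring genuine care is the identification of $o(S)$ with the number of odd-bar circle graphs of $S$, together with the fact that the reduction rules of Remark \ref{importantstatement} indeed leave at most one bar per circle graph. I expect this to be the main (and mild) obstacle, and it is essentially already established in the discussion of Remark \ref{importantstatement}; once granted, the degree bound and the invariance step are immediate, and the proof is short.
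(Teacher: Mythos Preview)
Your proposal is correct. The paper itself gives no formal proof of this Lemma: it is stated immediately after the one-line remark ``Note that bar only deduce the cusps in circle graph'' and is followed directly by an example, with no proof environment. Your argument supplies precisely the natural justification the paper omits: the oriented state expansion neither creates nor destroys bars, so every state $S$ of a diagram $D$ carries exactly $b(D)$ bars distributed among its circle graphs; by Remark~\ref{importantstatement} a circle graph contributes to $o(S)$ exactly when it carries an odd number of bars, hence at least one, giving $o(S)\le b(D)$. The passage to the normalized polynomial via \eqref{Norarrowpoly}, invariance from Theorem~\ref{invofNA}, and minimization over diagrams via \eqref{barnumber} then finish the argument. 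This is the intended content behind the paper's brief remark, so your approach agrees with (and in fact completes) the paper's.
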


\begin{exam}
For virtual knot with 2 classical crossings, there exists 4 inequivalent twisted knots (not virtual knot) as shown in Fig. \ref{Fig.four2tl}.
Their arrow polynomials are as follows.
Thus the latter three twisted knots are non-checkerboard colorable by Theorem \ref{main}.

\begin{equation}
   \langle 2.1\rangle_{NA}=A^{-6}(A^{2}-A^{4}+1).
\end{equation}

\begin{equation}
   \langle 2.2 \rangle_{NA}=A^{-6}(((-1+A^4+A^6) M)/A^4).
\end{equation}

\begin{equation}
   \langle 2.3 \rangle_{NA}=A^{-6}(M \left(-\frac{\text{K1}}{A^4}+A^2-\text{K1}+2\right)).
\end{equation}

\begin{equation}
   \langle 2.4 \rangle_{NA}=A^{-6}(-\frac{M^2}{A^4}+A^2+\text{K1}-M^2+1).
\end{equation}

\begin{figure}[!htbp]
  \centering
  \includegraphics[width=8cm]{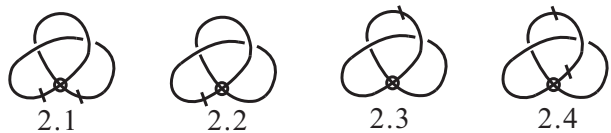}
    \renewcommand{\figurename}{Fig.}
  \caption{Four inequivalent twisted knots $2.1$, $2.2$, $2.3$, $2.4$.}
  \label{Fig.four2tl}
\end{figure}
\end{exam}

\section{Acknowledgements}
Deng is supported by Doctor's Funds of Xiangtan University (No. 09KZ$|$KZ08069) and NSFC (No. 12001464).
The project is also supported partially by Hu Xiang Gao Ceng Ci Ren Cai Ju Jiao Gong Cheng-Chuang Xin Ren Cai (No. 2019RS1057).

\end{document}